\newtheorem{defn}{Definition}[section]
\newtheorem{lem}{Lemma}[section]
\newtheorem{thm}{Theorem}[section]
\newtheorem{rem}{Remark}[section]
\title{Semidefinite Programming Approximation for a Matrix Optimization Problem over an Uncertain Linear System \footnote{~Fang's research was supported by the Walter Clark Professor Endowment of the North Carolina State University. Xing's research was supported by the National Natural Science Foundation of China (Grant No. 11771243).}
}
\author{
Jintao Xu \thanks{~Department of Mathematical Sciences, Tsinghua University, Beijing 100084, China. \newline Email: xujt19@mails.tsinghua.edu.cn}
\and
Shu-Cherng Fang \thanks{~Edward P. Fitts Department of Industrial and Systems Engineering, North Carolina State University, Raleigh, NC 27606, USA. \newline Email: fang@ncsu.edu}
\and
Wenxun Xing\thanks{~Department of Mathematical Sciences, Tsinghua University, Beijing 100084, China. \newline Email: wxing@tsinghua.edu.cn}}
\date{}
\begin{document}
\maketitle
\begin{abstract}
A matrix optimization problem over an uncertain linear system on finite horizon (abbreviated as MOPUL) is studied, in which the uncertain transition matrix is regarded as a decision variable. This problem is in general NP-hard. By using the given reference values of system outputs at each stage, we develop a polynomial-time solvable semidefinite programming (SDP) approximation model for the problem. The upper bound of the cumulative error between reference outputs and the optimal outputs of the approximation model is theoretically analyzed. Two special cases associated with specific applications are considered. The quality of the SDP approximate solutions in terms of feasibility and optimality is also analyzed. Results of numerical experiments are presented to show the influences of perturbed noises at reference outputs and control levels on the performance of SDP approximation.
\end{abstract}
\textbf{Keywords }Matrix optimization, Semidefinite programming, Uncertain linear system, NP-hard, Approximation model\\
\textbf{Mathematics Subject Classification (2020) }90C22 90C26 90C30 90C59 93C05
\section{Introduction}\label{section:1}
The discrete-time uncertain linear system is widely studied in control theory \cite{Kothare1996,Cohen1998,Cuzzola2002,Duan2006,Heemels2010,Cairano2016,Tripathy2017}.
The uncertainty may come from the uncertain parameter matrix associated with the given constraint set such as a convex hull \cite{Kothare1996,Cuzzola2002,Duan2006,Heemels2010,Blanco2010} or other settings \cite{Cohen1998,Tripathy2017}.
Robust optimization models are often adopted to deal with the uncertain parameters \cite{Kothare1996,Wan2003,Li2010,Parsi2022}.
However, in many scenarios such as the linear model predictive control (MPC) for optimal tracking \cite{Camacho2007,Alessio2009}, COVID-19 pandemic optimal control \cite{Xu2023}, Markov chain estimation, and enterprise input-output analysis \cite{Liu2020} (to be described in Section 2), we face a new class of matrix optimization problems that regard the uncertain transition matrix as a decision variable. In this paper, we consider the following matrix optimization problem over an uncertain linear system on finite horizon (\ref{MOPUL}):
\begin{align}
\begin{split}\label{MOPUL}
\min_{A, U, \omega}&~~
\lambda_{1}f_{1}\left(A\right)+\lambda_{2}f_{2}\left(U\right)+\lambda_{3}f_{3}\left(\omega\right)\nonumber\\
{\rm s.t.}
&~~x_{t}=Ax_{t-1}+Bu_{t-1},~~t=1, 2, \ldots, N,\\
&~~y_{t}=Cx_{t},~~t=0, 1, \ldots, N,\\
&~~\sum_{t=1}^{N}\left\Vert y_{t}-r_{t}\right\Vert_{2}\leq \omega,\\
&~~\left(A, U, \omega\right)\in\mathcal{S},
\end{split}\tag{MOPUL}
\end{align}
where $A\in\mathbb{R}^{n\times n}$, $U\coloneqq(u_{0}, u_{1}, \dots, u_{N-1})\in\mathbb{R}^{m\times N}$ and $\omega\in\mathbb{R}_+$ are decision variables, $\{\lambda_i\}_{i=1}^3\subseteq\mathbb{R}_+$, $N\in\mathbb{N}_+$, $B\in\mathbb{R}^{n\times m}$, $C\in\mathbb{R}^{p\times n}$, $\{r_t\}_{t=1}^N\subseteq\mathbb{R}^p$ and $\mathcal{S}\subseteq\mathbb{R}^{n\times n}\times\mathbb{R}^{m\times N}\times\mathbb{R}_+$ are given. $\{f_i(\cdot)\}_{i=1}^3$ are assumed to be semidefinite representable (SD representable) functions \cite{BenTal2001,Xing2020}, and $\mathcal{S}$ is assumed to be an SD representable set \cite{BenTal2001,Xing2020}. In addition, $C$ is assumed to be of full column rank.

In problem (\ref{MOPUL}), $f_1(A)$, $f_2(U)$ and $f_3(\omega)$ are the given objective functions of decision variables $A$, $U$ and $\omega$ with given weights of $\lambda_1$, $\lambda_2$ and $\lambda_3$, respectively. For example, $f_1(A)=\Vert A-A^{\rm r}\Vert_F$, where $A^{\rm r}$ is a given reference matrix, $f_2(U)=\sum_{t=1}^{N-1}\Vert u_t-u_{t-1}\Vert_2^2$ as in \cite{Alessio2009,Camacho2007}, and $f_3(\omega)=\omega$. Moreover, $A$, $B$, $C$ and $U$ are the transition matrix, fixed parameter matrices and control, respectively, of the discrete-time linear system on a finite horizon $N$ described by the first two constraints. $x_t\in\mathbb{R}^n$ is the $n$ dimensional system state with a given initial state $x_0$, and $y_{t}\in\mathbb{R}^p$ is the $p$ dimensional system output at the $t$th stage, $t=1, 2, \ldots, N$. $r_{t}$ carries the reference value of the system output $y_t$ for each $t=1, 2, \ldots, N$. $\omega$ is the control level/threshold of the cumulative error between $\{y_t\}_{t=1}^N$ and $\{r_t\}_{t=1}^N$.

The first two constraints are commonly seen in the control theory of discrete-time finite horizon linear systems. The difference is that the transition matrix $A$ is a decision variable in (\ref{MOPUL}) and an uncertain parameter in control theory.
The third constraint restricts the cumulative error between the system outputs and their reference values within a control level $\omega$. When $f_3(\omega) = \omega$, the cumulative error constraint can be lifted to the objective function. Other restrictions on the decision variables are contained in the set $\mathcal{S}$ as the fourth constraint, and entanglement of decision variables is allowed in it. In Section \ref{section:2}, we show that the (\ref{MOPUL}) model is widely applicable.

Notice that the first three constraints of (\ref{MOPUL}) are multivariate polynomial constraints. Since a multivariate polynomial optimization problem in general is NP-hard \cite{Nesterov2000}, we know (\ref{MOPUL}) is generally computationally intractable.
On the other hand, semidefinite programs (SDP) are polynomial-time solvable \cite{BenTal2001, Nesterov1994, Terlaky1996} with many successful applications to the control theory \cite{DAVIDD.2001, Balakrishnan2003,Blanco2010,Tanaka2018, Bujarbaruah2020}, multiple-input multiple-output (MIMO) analysis \cite{CHENG2019,Mobasher2007}, combinatorial optimization problems \cite{Goemans1995,Gaar2020,Han2002,Guimarildeaes2020}, and portfolio selection problems \cite{Ghaoui2003,Chen2011}. SDP solvers such as SeDuMi (\href{https://sedumi.ie.lehigh.edu}{https://sedumi.ie.lehigh.edu}), MOSEK (\href{https://www.mosek.com}{https://www.mosek.com}) and DSDP (\href{https://www.mcs.anl.gov/hs/software/DSDP/}{https://www.mcs.anl.gov/hs/software/DSDP/}) are readily available.

The first contribution of this paper is to construct an SDP approximation model for (\ref{MOPUL}). Notice that $r_{t}$ is regarded as the reference value of $y_{t}$. We can use $C^{\dag}r_{t}$ to approximate $x_t$, where $C^{\dag}$ denotes the Moore-Penrose inverse of matrix $C$. Similar to \cite{Xu2023}, we can replace the first constraint by $x_{t}=AC^{\dag}r_{t-1}+Bu_{t-1}, t=1, 2, \ldots, N$, to reformulate (\ref{MOPUL}) as an SDP approximation model.

The second contribution of this paper is to provide a theoretical analysis of the quality of SDP approximate solutions in terms of the feasibility and optimality. For an SDP approximate solution $(A^{{\rm a}*}, U^{{\rm a}*}, \omega^{{\rm a}*})$ and consequently output values of $y_{t}$ at stage $t$ of the linear system, an upper bound of the cumulative error $\sum_{t=1}^{N}\Vert y_{t}-r_{t}\Vert_{2}$ corresponding to $(A^{{\rm a}*},U^{{\rm a}*},\omega^{{\rm a}*})$ is provided in Theorem \ref{theorem:2} for general setting. Moreover, the feasibility of an SDP approximate solution to (\ref{MOPUL}) with respect to a fixed control level is guaranteed in Theorem \ref{theorem:3}. Motivated by the application problems, two special cases of (\ref{MOPUL}) with SDP approximations concerning two settings of $(\{\lambda_{i}\}_{i=1}^{3}, \{f_i\}_{i=1}^{3}, \mathcal{S})$ are considered in Subsection \ref{subsection:3.3} for better theoretical estimations on the optimal objective values.

The third contribution of this paper is to show the influences of perturbed noise levels at reference outputs and control levels on the performance of the SDP approximation model through numerical experiments. Equipped with accurate reference outputs and proper control levels, SDP approximation performs really well numerically.

The rest of the paper is organized as follows. Some specific applications of (\ref{MOPUL}) are introduced in Section \ref{section:2}. In Section \ref{section:3}, an SDP approximation model is constructed, and theoretic analysis of its performance is provided. Numerical results are reported in Section \ref{section:4} and some concluding remarks are made in Section \ref{section:5}.

\textbf{Notations.} Throughout the paper, $\mathbb{R}^{n}$, $\mathbb{R}_{+}^{n}$, $\mathbb{R}^{m\times n}$, and $\mathbb{N}_{+}$ denote the sets of real $n$-dimensional vectors, nonnegative vectors, $m\times n$ matrices, and positive integers, respectively. $\textbf{S}^{n}$, $\textbf{S}_{+}^{n}$, and $\textbf{S}_{++}^{n}$ denote the sets of real $n\times n$ symmetric, positive semidefinite ($X\succeq0$), and positive definite matrices, respectively.
$X^{\dag}$ denotes the Moore-Penrose inverse of $X$. $\Vert x\Vert_{2}=(\sum_{i=1}^{n}x_{i}^{2})^{\frac{1}{2}}$ and $\Vert x\Vert_{Q}=\sqrt{x^{T}Qx}$, where $Q\in\textbf{S}_{++}^{n}$. $\Vert X\Vert_{F}$, $\Vert X\Vert_{2}$, and $\Vert X\Vert_{*}$ denote the Frobenius norm, the spectral norm which is equal to the maximum singular value of $X$, and the nuclear norm which is equal to the sum of all singular values of matrix $X$, respectively.
$O$ and $I$ denote the matrix of all zeros and the unit matrix whose sizes vary from the context, respectively. $\boldsymbol{0}$ and $\boldsymbol{1}$ denote the column vector of all zeros and ones whose sizes vary from the context, respectively.

\section{Applications}\label{section:2}
In this section, we present four specific applications of problem (\ref{MOPUL}). Their special structures and the quality of the corresponding SDP approximate solutions in terms of feasibility and optimality will be further investigated in Sections \ref{section:3} and \ref{section:4}.
\subsection{Linear model predictive control for optimal tracking}\label{subsection:2.1}
Model predictive control (MPC) is a class of optimal control strategies, in which the optimizer determines control signals and the model predicts outputs \cite{Camacho2007}.
Referring to equation (18) in \cite{Alessio2009}, equation (2.5) in \cite{Camacho2007}, and related discussions therein, an optimal tracking problem over an uncertain linear system goes in the following form:

\begin{align}
\begin{split}\label{O-MPC}
\min_{A, U}&~~\sum_{t=1}^{N}\left\Vert y_{t}-r_{t}\right\Vert_{2}+\lambda\sum_{t=1}^{N-1}\left\Vert u_{t}-u_{t-1}\right\Vert_{2}\\
{\rm s.t.}
&~~x_{t}=Ax_{t-1}+Bu_{t-1},~~t=1, 2, \ldots, N,\\
&~~y_{t}=Cx_{t},~~t=0, 1, \ldots, N,\\
&~~(A, U)\in\mathcal{S}_{\rm\scriptscriptstyle MPC},
\end{split}\tag{O-MPC}
\end{align}
where the transition matrix $A\in\mathbb{R}^{n\times n}$ and control $U\coloneqq(u_{0}, u_{1}, \ldots, u_{N-1})\in\mathbb{R}^{m\times N}$ are decision variables, system horizon $N\in\mathbb{N}_{+}$, parameter $\lambda\geq0$, $\{x_t\}_{t=0}^N\subseteq\mathbb{R}^n$ are $n$ dimensional system states with a given initial state $x_0$, $\{y_t\}_{t=0}^N\subseteq\mathbb{R}^p$ are $p$ dimensional system outputs, $B\in\mathbb{R}^{n\times m}$ and $C\in\mathbb{R}^{p\times n}$ are given system parameter matrices, and $\{r_{t}\}_{t=1}^{N}\subseteq\mathbb{R}^p$ are given reference signals. The cumulative error $\sum_{t=1}^{N}\Vert y_{t}-r_{t}\Vert_{2}$ enforces system outputs to track the given reference signals, and control efforts $\sum_{t=1}^{N-1}\left\Vert u_{t}-u_{t-1}\right\Vert_{2}$ are penalized for variations.
SD representable set $\mathcal{S}_{\rm\scriptscriptstyle MPC}=\mathcal{S}_{\rm\scriptscriptstyle UC}\cap~\mathcal{S}_{\rm\scriptscriptstyle AR}\subseteq\mathbb{R}^{n\times n}\times\mathbb{R}^{m\times N}$, where $\mathcal{S}_{\rm\scriptscriptstyle UC}$ is the uncertainty set of linear system and $\mathcal{S}_{\rm\scriptscriptstyle AR}$ is composed of additional restrictions on $(A, U)$.
Examples of $\mathcal{S}_{\rm\scriptscriptstyle UC}$ include
$\{{\rm constant}~A\}\times\mathbb{R}^{m\times N}$ for the linear system with a constant transition matrix \cite{Bemporad2000} and $\{\sum_{i=1}^{k}\theta^i A^i, (\theta^{1}, \theta^{2}, \ldots, \theta^{k})^{\mathrm{T}}\in\mathbb{R}_{+}^{k}, \sum_{i=1}^{k}\theta^{i}=1\}\times\mathbb{R}^{m\times N}$ with given $k$ matrices $\{A^i\}_{i=1}^k$ for the uncertain linear system with transition matrix in the polytopic uncertainty set \cite{Cairano2016}.
Examples of $\mathcal{S}_{\rm\scriptscriptstyle AR}$ include
$\mathbb{R}^{n\times n}\times\{U|\alpha_{1}\leq u_{t}-u_{t-1}\leq\alpha_{2}\}$ with $\alpha_{1}, \alpha_{2}\in\mathbb{R}^m$ and component-wise inequalities  \cite{Alessio2009}.
Notice that (\ref{O-MPC}) is a special case of (\ref{MOPUL}) by setting $\lambda_{1}=0$, $\lambda_{2}=\lambda$, $\lambda_{3}=1$, $f_2(U)=\sum_{t=1}^{N-1}\left\Vert u_{t}-u_{t-1}\right\Vert_{2}$, $f_3(\omega)=\omega$, and $\mathcal{S}=\mathcal{S}_{\rm\scriptscriptstyle MPC}\times\mathbb{R}_+$.

\subsection{COVID-19 pandemic optimal control model}\label{subsection:2.2}
To realize an effective prevention and control for the COVID-19 pandemic, we can construct the so-called ``susceptible-asymptomatic infected-symptomatic infected-removed optimal control'' model as below by dividing the total population into 4 groups of susceptible (S), asymptomatic infected (I$_\text{a}$), symptomatic infected (I$_\text{s}$), and removed (R).
\begin{align}
\begin{split}\label{O-COVID}
\min_{A, U}&~~\sum_{t=1}^{N}\left\Vert x_{t}-r_{t}\right\Vert_{2}\\
{\rm s.t.}&~~
x_{t}=Ax_{t-1}+u_{t-1},~~t=1, 2,\ldots, N,\\
&~~\left(A, U\right)\in\mathcal{S}_{\rm\scriptscriptstyle COVID},
\end{split}\tag{O-COVID}
\end{align}
where the transmission matrix $A\in\mathbb{R}^{4\times4}$ and the exit and entry control $U\coloneqq(u_{0}, u_{1}, \ldots, u_{N-1})\in\mathbb{R}^{4\times N}$ are decision variables, $N\in\mathbb{N}_+$ is the duration of COVID-19 transmission studied in (\ref{O-COVID}),
$\{x_{t}\coloneqq(x_t^{\rm\scriptscriptstyle S}, x_t^{\rm\scriptscriptstyle I_a}, x_t^{\rm\scriptscriptstyle I_s}, x_t^{\rm\scriptscriptstyle R})^{\mathrm T}\}_{t=0}^N$ and $\{u_{t}\coloneqq(u_t^{\rm\scriptscriptstyle S}, u_t^{\rm\scriptscriptstyle I_a}, u_t^{\rm\scriptscriptstyle I_s}, u_t^{\rm\scriptscriptstyle R})^{\mathrm T}\}_{t=0}^N\subseteq\mathbb{R}^4$ are the numbers of individuals in each group and their variations through exit and entry, respectively, and $\{r_{t}\coloneqq(r_t^{\rm\scriptscriptstyle S}, r_t^{\rm\scriptscriptstyle I_a}, r_t^{\rm\scriptscriptstyle I_s}, r_t^{\rm\scriptscriptstyle R})^{\mathrm T}\}_{t=1}^N\subseteq\mathbb{R}^4$ are the expected numbers of individuals in each group.
Then COVID-19 transmission is $x_{t}=Ax_{t-1}+u_{t-1}$ in (\ref{O-COVID}). Additional constraints on $(A, U)$ are contained in the SD representable constraint set $\mathcal{S}_{\rm\scriptscriptstyle COVID}\subseteq\mathbb{R}^{4\times 4}\times\mathbb{R}^{4\times N}$.
To realize the target $\{r_t\}_{t=1}^{N}$ estimated by the medical facilities, the transmission matrix $A$ and the exit and entry control $U$ are determined in (\ref{O-COVID}) through the minimization of $\sum_{t=1}^{N}\Vert x_{t}-r_{t}\Vert_{2}$.
Notice that (\ref{O-COVID}) is a special case of (\ref{MOPUL}) by setting $\lambda_1=\lambda_2=0, \lambda_3=1$, $f_3(\omega)=\omega$, $B=C=I$, and $\mathcal{S}=\mathcal{S}_{\rm\scriptscriptstyle COVID}\times\mathbb{R}_+$.

\subsection{Markov chains Estimation}\label{subsection:2.3}
Let $\{X_{t}\}_{t\geq0}$ be a homogeneous Markov chain on states $\{s_{i}\}_{i=1}^{m}$ with an unknown low-rank transition matrix $P=(p_{ij}\coloneqq\mathbb{P}\left(X_{t}=s_{j}|X_{t-1}=s_{i}\right))_{m\times m}\in\mathbb{R}^{m\times m}$, which implies a latent low-dimensionality structure \cite{Zhu2021}. We can construct an optimization model for the Markov chains estimation with a low-rank demand as the following:
\begin{align}
\begin{split}\label{O-Markov}
\min_{P}&~~\sum_{t=1}^{N}\left\Vert \pi_{t}-r_{t}\right\Vert_{2}\\
{\rm s.t.}
&~~\pi_{t}=P\pi_{t-1},~~t=1, 2, \ldots, N,\\
&~~P\in\mathcal{S}_{\rm\scriptscriptstyle Markov},
\end{split}\tag{O-Markov}
\end{align}
where the transition matrix $P$ is a decision variable, observation horizon $N\in\mathbb{N}_+$, probability distributions $$\pi_{t}\coloneqq\left(\mathbb{P}\left(X_{t}=s_{1}\right), \mathbb{P}\left(X_{t}=s_{2}\right), \ldots, \mathbb{P}\left(X_{t}=s_{m}\right)\right)^{\mathrm{T}}\in\mathbb{R}^{m}, t=0, 1, \ldots, N,$$
the $i$th component of $r_t$, i.e. $(r_{t})_{i}$ is an observed frequency of the event $\{X_{t}=s_{i}\}$, for $i=1, 2, \ldots, m$, $t=1, 2, \ldots, N$, and
\begin{align*}
\mathcal{S}_{\rm\scriptscriptstyle Markov}\coloneqq\left\{P=\left(p_{ij}\right)_{m\times m}\in\mathbb{R}^{m\times m}\left|\begin{array}{ll}
& p_{ij}\geq0, i, j=1, 2, \ldots, m,\\
& \sum_{i=1}^{m}p_{ij}=1, j=1, 2, \ldots, m,\\
& \left\Vert P\right\Vert_{*}\leq\alpha,\\
& \text{and subject to a finite number of linear inequality}\\
& \text{constraints on}~P.
\end{array}
\right.\right\},
\end{align*}
in which $\Vert\cdot\Vert_*$ denotes the nuclear norm and $0<\alpha<m$.
Different from Zhang and Wang \cite{Zhang2020}, Li et al. \cite{Li2018}, and Zhu et al. \cite{Zhu2021} of using the information of event $\{X_{t-1}=s_{i}, X_{t}=s_{j}\}$, (\ref{O-Markov}) estimates the low-rank transition matrix through frequency approximation of event $\{X_{t}=s_{i}\}$.
The low-rank demand is enforced by the nuclear norm constraint
in $\mathcal{S}_{\rm\scriptscriptstyle Markov}$. Notice that (\ref{O-Markov}) is a special case of (\ref{MOPUL}) by setting $\lambda_1=\lambda_2=0, \lambda_3=1$, $f_3(\omega)=\omega$, $B=O, C=I$, and $\mathcal{S}=\mathcal{S}_{\rm\scriptscriptstyle Markov}\times \mathbb{R}^{m\times N}\times\mathbb{R}_+$, where $O$ is the matrix of all zeros.

\subsection{Multi-stage enterprise input-output problems}\label{subsection:2.4}
Input-output analysis is a framework describing and analyzing input (consumption) and output (production) activities and their relations in an economy \cite{Miller2009,Liu2020}. Referring to \cite{Liu2020}, considering an enterprise production with $m_1$ self-made and $m_2$ out-sourced products, the following multi-stage enterprise input-output optimization problem can be constructed to realize the given expected output values of enterprise production by controlling production technologies and purchase-sale plans.
\begin{align}
\begin{split}\label{O-IN/OUTPUT1}
\min_{A, U}&~~\sum_{t=1}^{N}\left\Vert x_{t}-r_{t}\right\Vert_{2}\\
{\rm s.t.}
&~~x_{t}=Ax_{t-1}+u_{t-1},~~t=1, 2, \ldots, N,\\
&~~A\in\mathcal{S}_{\rm\scriptscriptstyle IO},
\end{split}\tag{O-IN/OUTPUT1}
\end{align}
where the production technology matrix $A\in\mathbb{R}^{(m_1+m_2)\times (m_1+m_2)}$ with structure described in (\ref{S_IO}) and purchase-sale control $U\coloneqq(u_{0}, u_{1}, \ldots, u_{N-1})\in\mathbb{R}^{(m_1+m_2)\times N}$ are decision variables, $N\in\mathbb{N}_+$ is the duration of enterprise production, $\{x_{t}\}_{t=1}^N\subseteq\mathbb{R}^{m_1+m_2}$ are the production output values of $m_1+m_2$ products in which $(x_{t})_{i}$ is the output value of the $i$th self-made product, $i=1, 2, \ldots, m_1$, and $(x_t)_{m_1+j}$ is the output value of the $j$th out-sourced product at each stage, $j=1, 2, \ldots, m_2$, $\{u_t\}_{t=0}^{N-1}\subseteq\mathbb{R}^{m_1+m_2}$ are the purchase-sale values of $m_1+m_2$ products at each stage, $\{r_t\}_{t=1}^{N}\subseteq\mathbb{R}^{m_1+m_2}$ are given expected output values as the references for $\{x_t\}_{t=1}^N$, and constraint set
\begin{align}\label{S_IO}
\mathcal{S}_{\rm\scriptscriptstyle IO}\coloneqq
\left\{\begin{pmatrix}
I-G & O\\
-H & I
\end{pmatrix}\in\mathbb{R}^{(m_1+m_2)\times (m_1+m_2)}\left|
\begin{array}{l}
\text{subject to a finite number of linear inequality}\\
\text{constraints on $G\in\mathbb{R}^{m_1\times m_1}$ and $H\in\mathbb{R}^{m_2\times m_{1}}$}.\end{array}\right.\right\},
\end{align}
in which $G$ and $H$ are composed of technical coefficients \cite{Liu2020}. Then enterprise production is $x_{t}=Ax_{t-1}+u_{t-1}$ in (\ref{O-IN/OUTPUT1}). The production technology matrix $A$ and purchase-sale control $U$ are determined to realize the expected enterprise output values by minimizing the discrepancy between the system output and the expected output values $\sum_{t=1}^{N}\left\Vert x_{t}-r_{t}\right\Vert_{2}$.
Notice that (\ref{O-IN/OUTPUT1}) is a special case of (\ref{MOPUL}) by setting $\lambda_1=\lambda_2=0, \lambda_3=1$, $f_3(\omega)=\omega$, $B=C=I$, and $\mathcal{S}=\mathcal{S}_{\rm\scriptscriptstyle IO}\times \mathbb{R}^{(m_1+m_2)\times N}\times \mathbb{R}_+$.

When a steady and controllable change of the production technology is preferred within a guaranteed level of
cumulative error, we may consider the following problem:
\begin{align}
\begin{split}\label{O-IN/OUTPUT2}
\min_{A, U}&~~\left\Vert A-A^{\rm r}\right\Vert_{F}\nonumber\\
{\rm s.t.}&~~x_{t}=Ax_{t-1}+u_{t-1},~~t=1, 2, \ldots, N,\\
&~~\sum_{t=1}^{N}\left\Vert x_{t}-r_{t}\right\Vert_{2}\leq\omega,\\
&~~\Vert u_{t}-u_{t}^{\rm r}\Vert_{2}\leq\omega_{t},~~t=0, 1,\ldots, N-1,\\
&~~A\in\mathcal{S}_{\rm\scriptscriptstyle IO},
\end{split}\tag{O-IN/OUTPUT2}
\end{align}
where the production technology matrix $A\in\mathbb{R}^{(m_1+m_2)\times(m_1+m_2)}$ and purchase-sale control $U\coloneqq(u_{0}, u_{1},$\\ $\ldots, u_{N-1})\in\mathbb{R}^{(m_1+m_2)\times N}$ are decision variables, $N\in\mathbb{N}_+$ is the duration of enterprise production, $\{r_t\}_{t=1}^N\subseteq\mathbb{R}^{m_1+m_2}$, $A^{\rm r}\in\mathbb{R}^{(m_1+m_2)\times(m_1+m_2)}$, and $\{u_{t}^{\rm r}\}_{t=0}^{N-1}\subseteq\mathbb{R}^{m_1+m_2}$ are given reference values of $\{x_t\}_{t=1}^N$, $A$, and $\{u_{t}\}_{t=0}^{N-1}$, respectively, and $\omega, \{\omega_{t}\}_{t=0}^{N-1}\subseteq\mathbb{R}_+$ are control levels.
The second constraint guarantees a cumulative precision of the iteration within the control level $\omega$. And the third constraint guarantees a controllable change of the purchase-sale values.
Notice that (\ref{O-IN/OUTPUT2}) is a special case of (\ref{MOPUL}) by setting $\lambda_1=1, \lambda_2=\lambda_3=0$, $f_1(A)=\Vert A-A^{\rm r}\Vert_{F}$, $B=C=I$, and $\mathcal{S}=\mathcal{S}_{\rm\scriptscriptstyle IO}\times\{U|\Vert u_{t}-u_{t}^{\rm r}\Vert_{2}\leq\omega_{t}, t=0, 1,\ldots, N-1\}\times\{{\rm constant}~\omega\}$.

\section{Semidefinite approximation}\label{section:3}
In this section, we explore the SDP relaxation for problem (\ref{MOPUL}).
In Subsection \ref{subsection:3.1}, we discuss the computational intractability of the problem and construct a polynomial-time solvable SDP approximation model in Subsection \ref{subsection:3.2}. The quality of SDP approximate solutions to two specific settings in terms of feasibility and optimality is analyzed in Subsection \ref{subsection:3.3}.
\subsection{Computational intractability}\label{subsection:3.1}
When $\{f_i\}_{i=1}^3$ and $\mathcal{S}$ are SD representable, the computational intractability of (\ref{MOPUL}) mainly comes from the entanglement of decision variables in the first three constraints.
Specifically, combined with the first two constraints, the third constraint of (\ref{MOPUL}) is equivalent to
\begin{align*}
& \sum_{t=1}^{N}\xi_{t}\leq \omega,\\
& \left\Vert y_{t}-r_{t}\right\Vert_{2}^{2}\leq\xi_t^2, 0\leq\xi_{t},~t=1, 2,\ldots, N,
\end{align*}
where
\begin{align}
\begin{split}\label{poly}
\left\Vert y_{t}-r_{t}\right\Vert_{2}^{2}=&x_{0}^{\mathrm{T}}(A^{\mathrm{T}})^{t}C^{\mathrm{T}}CA^{t}x_{0}+2\sum_{j=0}^{t-1}x_{0}^{\mathrm{T}}(A^{\mathrm{T}})^{t}C^{\mathrm{T}}CA^{j}Bu_{t-1-j}\\
&+\sum_{i, j=0}^{t-1}u_{t-1-i}^{\mathrm{T}}B^{\mathrm{T}}(A^{\mathrm{T}})^{i}C^{\mathrm{T}}CA^{j}Bu_{t-1-j}-2x_{0}^{\mathrm{T}}(A^{\mathrm{T}})^{t}C^{\mathrm{T}}r_{t}\\
&-2\sum_{i=0}^{t-1}u_{t-1-i}^{\mathrm{T}}B^{\mathrm{T}}(A^{\mathrm{T}})^{i}C^{\mathrm{T}}r_{t}+r_{t}^{\mathrm{T}}r_{t}.
\end{split}
\end{align}
This is a nonnegative multivariate polynomial of degree $2t$, $t=1, 2,\ldots, N$ over $A$. Thus (\ref{MOPUL}) equivalently contains a series of multivariate polynomial constraints.
Since the problem of minimizing a nonnegative multivariate polynomial of degree higher than or equal to 4 is in general NP-hard \cite{Nesterov2000}, we know (\ref{MOPUL}) is NP-hard.

\subsection{Approximation model}\label{subsection:3.2}
Notice that the vector $r_{t}$ in (\ref{MOPUL}) can be viewed as given reference values of the system output $y_{t}$ at each stage. For (\ref{O-MPC}) in Subsection \ref{subsection:2.1}, it represents the reference signal in the linear control system. For (\ref{O-COVID}) in Subsection \ref{subsection:2.2}, it represents the expected number of individuals. For (\ref{O-Markov}) in Subsection \ref{subsection:2.3}, it represents the observed frequency of certain event. And for (\ref{O-IN/OUTPUT1}) and (\ref{O-IN/OUTPUT2}) in Subsection \ref{subsection:2.4}, it represents the expected output value of enterprise production. In the proposed approximation model, with the similar idea of \cite{Xu2023}, $r_{t}$ is used to decouple the nested iteration of $x_{t}$ to avoid the multivariate polynomial structures in (\ref{poly}). Specifically, we replace the constraint $x_{t}=Ax_{t-1}+Bu_{t-1}$, $t=1, 2, \ldots, N$ in (\ref{MOPUL}) by $x_{t}=AC^{\dag}r_{t-1}+Bu_{t-1}$, $t=1, 2, \ldots, N$ . Then an approximate matrix optimization problem over an uncertain linear system on finite horizon (abbreviated as \ref{AMOPUL}) can be constructed as the following:
\begin{align}
\begin{split}\label{AMOPUL}
\min_{A, U, \omega}&\ \  \lambda_{1}f_{1}\left(A\right)+\lambda_{2}f_{2}\left(U\right)+\lambda_{3}f_{3}\left(\omega\right)\\
{\rm s.t.}
&\ \ r_{0}=Cx_{0},~~x_{0}^{\rm a}=x_{0},\\
&\ \ x_{t}^{\rm a}=AC^{\dag}r_{t-1}+Bu_{t-1},~~t=1, 2,\ldots, N,\\
&\ \ y_{t}^{\rm a}=Cx_{t}^{\rm a},~~t=0, 1,\ldots, N,\\
&\ \ \sum_{t=1}^{N}\left\Vert y_{t}^{\rm a}-r_{t}\right\Vert_{2}\leq \omega,\\
&\ \ (A,U,\omega)\in \mathcal{S},
\end{split}\tag{AMOPUL}
\end{align}
where the transition matrix $A\in\mathbb{R}^{n\times n}$, control $U\coloneqq(u_{0}, u_{1}, \dots, u_{N-1})\in\mathbb{R}^{m\times N}$, and control level/thre-shold $\omega\in\mathbb{R}_+$ are decision variables. To avoid potential confusions in notation, $x_{t}^{\rm a}$ and $y_{t}^{\rm a}$ are used to denote the approximate values of $x_{t}$ and $y_{t}$ in (\ref{MOPUL}), respectively. We call $\sum_{t=1}^{N}\Vert y_{t}^{\rm a}-r_{t}\Vert_{2}$ the approximate cumulative error. The meanings of other notations are the same as in (\ref{MOPUL}).

Definitions and some properties of the SD representability are included below.
\begin{defn}{\textbf{(Semidefinite representable set  \cite{BenTal2001})}}\label{definition:1}\\
A convex set $\mathcal{C}\subseteq\mathbb{R}^{n}$ is called semidefinite representable (SD representable) if there exist
\begin{align*}
L_{i}^{j}\in\textbf{S}^{m_j}, i=0, 1, \ldots, n+d, j=1,2,\dots, J
\end{align*}
such that
\begin{align*}
\mathcal{C}=\left\{x=\left(x_{1}, x_{2}, \ldots, x_{n}\right)^{\mathrm{T}}\in\mathbb{R}^{n}\left|
\begin{array}{ll}
&
L_{0}^{j}+\sum_{i=1}^{n}x_{i}L_{i}^{j}+\sum_{i=1}^{d}u_{i}L_{n+i}^{j}\succeq 0,~~j=1,2\dots,J\\
& \mathrm{\ for\ some\ } u=\left(u_{1}, u_{2}, \ldots, u_{d}\right)^{\mathrm{T}}\in\mathbb{R}^{d}
\end{array}
\right.\right\}.
\end{align*}
\end{defn}
\begin{defn}{\textbf{(Semidefinite representable function \cite{BenTal2001})}}\label{definition:2}\\
A convex function $f: \mathbb{R}^{n}\rightarrow\mathbb{R}\cup\{+\infty\}$ is called semidefinite representable (SD representable) if the set $\{(x, \lambda)\in\mathbb{R}^{n+1}|f(x)\leq \lambda\}$ is SD representable.
\end{defn}
\noindent SD representability of sets is preserved through the set operations of intersection, direct product, affine mapping and its inverse \cite{BenTal2001}. Notice that the matrix norm $\Vert\cdot\Vert_{F}$, $\Vert\cdot\Vert_Q$, $\Vert\cdot\Vert_{2}$, and $\Vert\cdot\Vert_{*}$ used in this paper are all SD representable functions \cite{BenTal2001}.
The next lemma discloses the connections between the SD representability and SDP problems.
\begin{lem}[\cite{BenTal2001}]\label{lemma:1}
A minimization problem $\min_{x}\{c^{\mathrm{T}}x|x\in\cap_{i=1}^{k}\mathcal{C}_{i}\subseteq\mathbb{R}^n\}$ can be equivalently formulated as an SDP problem if $\mathcal{C}_{i}$ is SD representable for $i=1, 2, \ldots, k$.
\end{lem}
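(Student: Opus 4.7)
The plan is to reduce the problem to the standard form of an SDP by exhibiting a single linear matrix inequality whose feasibility in an enlarged variable is equivalent to $x\in\cap_{i=1}^{k}\mathcal{C}_{i}$, and then appealing to the definition of an SDP (minimizing a linear functional over a block-diagonal positive semidefinite constraint with linear dependence on the decision variables).

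First, I would apply Definition \ref{definition:1} to each set $\mathcal{C}_{i}$, $i=1,2,\ldots,k$, separately. This produces, for each $i$, an integer $J_{i}$, auxiliary-variable dimension $d_{i}$, and symmetric matrices $L_{0}^{j,i},L_{1}^{j,i},\ldots,L_{n+d_{i}}^{j,i}\in\textbf{S}^{m_{j,i}}$ for $j=1,\ldots,J_{i}$, such that $x\in\mathcal{C}_{i}$ if and only if there exists $u^{(i)}\in\mathbb{R}^{d_{i}}$ with $L_{0}^{j,i}+\sum_{l=1}^{n}x_{l}L_{l}^{j,i}+\sum_{l=1}^{d_{i}}u_{l}^{(i)}L_{n+l}^{j,i}\succeq 0$ for every $j$. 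Because the auxiliary variables $u^{(i)}$ are independent across $i$, the intersection condition $x\in\cap_{i=1}^{k}\mathcal{C}_{i}$ is equivalent to the existence of a single concatenated vector $u=(u^{(1)},\ldots,u^{(k)})\in\mathbb{R}^{d_{1}+\cdots+d_{k}}$ for which all these LMIs hold simultaneously.

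Next, I would stack all of these LMIs into one block-diagonal matrix inequality. Writing $\widetilde{L}_{0}\coloneqq\mathrm{diag}(L_{0}^{j,i})$ over all $(i,j)$, and analogously defining $\widetilde{L}_{l}$ (for each coordinate of $x$ and each coordinate of the concatenated $u$) as block-diagonal matrices filled in the appropriate blocks and zero elsewhere, one obtains a single affine matrix-valued map $(x,u)\mapsto \widetilde{L}_{0}+\sum_{l=1}^{n}x_{l}\widetilde{L}_{l}+\sum_{l=1}^{D}u_{l}\widetilde{L}_{n+l}$, where $D=d_{1}+\cdots+d_{k}$, whose positive semidefiniteness is equivalent to the conjunction of all the original LMIs. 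Thus the feasibility of the stacked LMI in $(x,u)$ is equivalent to $x\in\cap_{i=1}^{k}\mathcal{C}_{i}$.

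Finally, one lifts the objective and the decision variable: the problem $\min\{c^{\mathrm{T}}x\mid x\in\cap_{i=1}^{k}\mathcal{C}_{i}\}$ has the same optimal value as $\min\{c^{\mathrm{T}}x\mid (x,u)\text{ feasible for the stacked LMI}\}$, since the auxiliary $u$ does not appear in the objective and only existentially quantifies the membership condition. This lifted program is in the canonical SDP form: minimize a linear function of $(x,u)$ over a single LMI. The only genuine point to verify carefully is that introducing the auxiliary variables $u^{(i)}$ into the decision vector does not change the infimum; this follows because for every feasible $x$ there exists some $u$ making $(x,u)$ feasible and conversely every feasible $(x,u)$ yields a feasible $x$. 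No routine obstacle is expected beyond bookkeeping of block indices, which I would leave at the schematic level.
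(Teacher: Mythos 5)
Your proof is correct. The paper itself states this lemma as a citation to Ben-Tal and Nemirovski and gives no proof of its own, and your argument --- expanding each $\mathcal{C}_i$ via Definition \ref{definition:1}, concatenating the independent auxiliary variables, stacking all LMIs into one block-diagonal LMI (using that a block-diagonal matrix is positive semidefinite iff each block is), and observing that projecting out the auxiliary variables does not change the infimum of a linear objective --- is exactly the standard argument underlying the cited result.
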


In order to construct an equivalent SDP reformulation, we need the next two results.
\begin{lem}\label{lemma:2}
Let $x\in\mathbb{R}^{n}$ and $X=\begin{pmatrix*}
O & x\\
x^{\mathrm{T}} & 0
\end{pmatrix*}\in\textbf{S}^{n+1}$. If $X\in\textbf{S}_{+}^{n+1}$, then $x=\boldsymbol{0}$.
\end{lem}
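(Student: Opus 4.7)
The plan is to exploit the positive semidefiniteness of $X$ by evaluating the quadratic form $v^{\mathrm{T}}Xv$ on a carefully chosen vector $v\in\mathbb{R}^{n+1}$. Since $X$ has the very sparse block structure with zero diagonal, almost every choice of $v$ collapses the quadratic form to something involving $x$ linearly with a single scalar multiplier; this gives plenty of room to force a sign violation unless $x=\boldsymbol{0}$.

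Concretely, I would write $v=\begin{pmatrix} y \\ t\end{pmatrix}$ with $y\in\mathbb{R}^n$ and $t\in\mathbb{R}$, and compute directly from the block form that
\begin{equation*}
v^{\mathrm{T}}Xv \;=\; y^{\mathrm{T}}\!\cdot\!O\cdot y \;+\; 2t\,y^{\mathrm{T}}x \;+\; 0\cdot t^{2} \;=\; 2t\,y^{\mathrm{T}}x.
\end{equation*}
The assumption $X\succeq 0$ demands $v^{\mathrm{T}}Xv\ge 0$ for every $y$ and $t$. Specializing to $y=x$ and $t=-1$ then yields $-2\,\Vert x\Vert_{2}^{2}\ge 0$, which forces $\Vert x\Vert_{2}=0$, i.e. $x=\boldsymbol{0}$.

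There is no real obstacle here: the only thing to be slightly careful about is reading off the quadratic form correctly from the block structure, and in particular observing that the zero blocks on the diagonal eliminate the $y^{\mathrm{T}}y$ and $t^{2}$ contributions, leaving a purely bilinear expression in $(y,t)$. An equivalent route, if one prefers to avoid choosing a specific $v$, is to invoke the standard property that a positive semidefinite matrix with a zero diagonal entry must have the entire corresponding row and column equal to zero (since every $2\times 2$ principal submatrix must itself be PSD, forcing $X_{ij}^{2}\le X_{ii}X_{jj}=0$); applied to the $(n+1,n+1)$ entry of $X$ this immediately yields $x=\boldsymbol{0}$. Either argument is a short one-paragraph proof.
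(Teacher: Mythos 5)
Your proof is correct and uses essentially the same idea as the paper: evaluating the quadratic form $v^{\mathrm{T}}Xv\ge 0$ on suitably chosen test vectors. The paper uses the $2n$ vectors $(0,\dots,\pm 1,\dots,0,1)^{\mathrm{T}}$ to force $\pm 2x_{i}\ge 0$ componentwise, whereas your single choice $v=(x^{\mathrm{T}},-1)^{\mathrm{T}}$ yields $-2\Vert x\Vert_{2}^{2}\ge 0$ in one step; this is a minor streamlining, not a different route.
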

\begin{proof}
Let $v_{i, n+1}=(0, \ldots, 0, 1, 0, \ldots, 0, 1)^{\mathrm{T}}\in\mathbb{R}^{n+1}$, whose $i$th and $(n+1)$th elements are 1, and $\bar{v}_{i, n+1}=(0, \ldots, 0, -1, 0, \ldots, 0, 1)^{\mathrm{T}}\in\mathbb{R}^{n+1}$, whose $i$th element is $-1$ and $(n+1)$th element is 1, $i=1, 2, \ldots, n$. Since $X\in\mathcal{S}_+^{n+1}$, we know that
\begin{align*}
2x_{i}=v_{i, n+1}^{\mathrm{T}}Xv_{i, n+1}\geq0,~\text{and}~ -2x_{i}=\bar{v}_{i, n+1}^{\mathrm{T}}X\bar{v}_{i, n+1}\geq0, i=1, 2, \ldots, n.
\end{align*}
Therefore $x=\boldsymbol{0}$.
\end{proof}
\begin{lem}{(Schur complement \cite[Theorem 1.12(b) ]{Horn2005}}\label{lemma:3})\\
Let $M\in\textbf{S}^{n}$ be partitioned as
\begin{align*}
M=\begin{pmatrix}
E & F\\
F^{\mathrm{T}} & G
\end{pmatrix},
\end{align*}
where $E\in\textbf{S}^{q}$ is nonsingular with $q\leq n-1$. Then $M\in\textbf{S}_{+}^{n}$ if and only if $E\in\textbf{S}_{++}^{q}$ and $G-F^{\mathrm{T}}E^{-1}F\in\textbf{S}_{+}^{n-q}$.
\end{lem}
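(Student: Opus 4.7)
My plan is to reduce $M$ to block-diagonal form by a block congruence that is available precisely because $E$ is assumed nonsingular. Set
\[
P=\begin{pmatrix} I & -E^{-1}F \\ O & I \end{pmatrix},
\]
which is invertible (its inverse simply replaces $-E^{-1}F$ by $+E^{-1}F$). A direct block multiplication, using only $E^{\mathrm{T}}=E$, yields the identity
\[
P^{\mathrm{T}} M P = \begin{pmatrix} E & O \\ O & G-F^{\mathrm{T}}E^{-1}F \end{pmatrix}.
\]
Since $P$ is invertible, congruence by $P$ preserves inertia, so $M\in\textbf{S}_+^n$ if and only if the block-diagonal matrix on the right belongs to $\textbf{S}_+^n$, and the latter in turn is equivalent to $E\in\textbf{S}_+^q$ together with $G-F^{\mathrm{T}}E^{-1}F\in\textbf{S}_+^{n-q}$.

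For the reverse direction, suppose $E\in\textbf{S}_{++}^q$ and $G-F^{\mathrm{T}}E^{-1}F\in\textbf{S}_+^{n-q}$. Then $E^{-1}$ exists, so $P$ is well-defined, the block-diagonal matrix lies in $\textbf{S}_+^n$, and the congruence identity delivers $M\in\textbf{S}_+^n$. For the forward direction, suppose $M\in\textbf{S}_+^n$. Its leading $q\times q$ principal submatrix is $E$, so $E\in\textbf{S}_+^q$; combined with the hypothesis that $E$ is nonsingular, this upgrades $E$ to $\textbf{S}_{++}^q$. Now $E^{-1}$ exists, the congruence identity applies, and positive semidefiniteness of the block-diagonal matrix forces $G-F^{\mathrm{T}}E^{-1}F\in\textbf{S}_+^{n-q}$.

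The only real computation is the block identity $P^{\mathrm{T}}MP=\mathrm{diag}(E,\, G-F^{\mathrm{T}}E^{-1}F)$, which is a short verification using symmetry of $E$. The main conceptual subtlety, and the step I would be most careful about, is in the forward direction: one must use the separately-given nonsingularity of $E$ to upgrade $E\succeq 0$ (obtained merely from the principal-submatrix property) to $E\succ 0$, since otherwise $E^{-1}$ — and hence the very statement of the Schur complement — would not make sense.
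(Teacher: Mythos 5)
Your proof is correct. Note, however, that the paper does not prove this lemma at all: it is imported verbatim as Theorem 1.12(b) of the cited reference on the Schur complement, so there is no in-paper argument to compare against. What you have written is the standard congruence proof — conjugating $M$ by the unit upper-triangular block matrix $P=\bigl(\begin{smallmatrix} I & -E^{-1}F \\ O & I\end{smallmatrix}\bigr)$ to obtain $P^{\mathrm{T}}MP=\mathrm{diag}(E,\,G-F^{\mathrm{T}}E^{-1}F)$, then using invariance of positive semidefiniteness under congruence — and your block computation checks out (the lower-left block of $P^{\mathrm{T}}MP$ vanishes precisely because $E^{-\mathrm{T}}=E^{-1}$). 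You are also right to flag the one delicate point in the forward direction: positive semidefiniteness of $M$ only gives $E\succeq 0$ via the principal-submatrix argument, and it is the separately assumed nonsingularity of $E$ that upgrades this to $E\in\textbf{S}_{++}^{q}$, which is exactly why the lemma's hypothesis is stated the way it is. This is essentially the argument one finds in the cited source, so the proposal is a faithful self-contained replacement for the citation.
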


\begin{thm}\label{theorem:1}
Under the assumption that $\{f_{i}\}_{i=1}^{3}$ and $\mathcal{S}$ are SD representable, problem (\ref{AMOPUL}) has the following SDP reformulation:
{\small \begin{align*}
\min_{A, U, \omega, \left\{\xi_{t}\right\}_{t=1}^{N}}&~~ \lambda_{1}f_{1}\left(A\right)+\lambda_{2}f_{2}\left(U\right)+\lambda_{3}f_{3}\left(\omega\right)\\
{\rm s.t.}~~~~~&~~
\begin{pmatrix}
\xi_{t}I & CAC^{\dag}r_{t-1}+CBu_{t-1}-r_{t}\\
\left(CAC^{\dag}r_{t-1}+CBu_{t-1}-r_{t}\right)^{\mathrm{T}} & \xi_{t}
\end{pmatrix}\in\textbf{S}_{+}^{p+1},\\
&~~t=1, 2, \ldots, N,\\
&~~\sum_{t=1}^{N}\xi_{t}\leq\omega,\\
&~~(A,U,\omega)\in \mathcal{S},\nonumber
\end{align*}}

\vspace{-1mm}
\noindent where $A\in\mathbb{R}^{n\times n}$, $U\coloneqq(u_{0}, u_{1}, \dots, u_{N-1})\in\mathbb{R}^{m\times N}$, $\omega\in\mathbb{R}_+$, and $\left\{\xi_{t}\right\}_{t=1}^{N}\subseteq\mathbb{R}_+$ are decision variables, and the remaining notations are defined the same as in (\ref{AMOPUL}).
\end{thm}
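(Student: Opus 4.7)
The plan is to eliminate the intermediate state and output variables by substitution, rewrite the $\ell_2$ cumulative-error constraint via nonnegative slacks and linear matrix inequalities (LMIs), and then invoke Lemma \ref{lemma:1} to obtain an SDP.

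First I would substitute $x_t^{\rm a} = AC^{\dag}r_{t-1} + Bu_{t-1}$ and consequently $y_t^{\rm a} = CAC^{\dag}r_{t-1} + CBu_{t-1}$ to eliminate $\{x_t^{\rm a}\}$ and $\{y_t^{\rm a}\}$ from (\ref{AMOPUL}). The cumulative-error constraint then reads $\sum_{t=1}^N \|CAC^{\dag}r_{t-1} + CBu_{t-1} - r_t\|_2 \leq \omega$. Introducing nonnegative slacks $\{\xi_t\}_{t=1}^N$, this decouples into the linear inequality $\sum_{t=1}^N \xi_t \leq \omega$ together with $\|CAC^{\dag}r_{t-1} + CBu_{t-1} - r_t\|_2 \leq \xi_t$ for each $t$; no optimality is lost because, given any $(A,U,\omega)$ feasible to (\ref{AMOPUL}), one may simply take $\xi_t = \|CAC^{\dag}r_{t-1} + CBu_{t-1} - r_t\|_2$.

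The crux is then to convert each second-order-cone constraint $\|v\|_2 \leq \xi$ into the LMI $\left(\begin{smallmatrix} \xi I & v \\ v^{\mathrm{T}} & \xi \end{smallmatrix}\right) \in \textbf{S}_+^{p+1}$. For $\xi > 0$ this is a direct application of the Schur complement (Lemma \ref{lemma:3}) with $E = \xi I \in \textbf{S}_{++}^{p}$, reducing the PSD condition to $\xi - v^{\mathrm{T}}(\xi I)^{-1}v \geq 0$, equivalently $\xi^2 \geq \|v\|_2^2$. For $\xi = 0$, positive semidefiniteness of the diagonal block excludes $\xi < 0$, while Lemma \ref{lemma:2} applied to the off-diagonal block forces $v = \boldsymbol{0}$, matching $\|v\|_2 \leq \xi = 0$ on both sides; the converse directions are immediate. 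Applying this equivalence with $v = CAC^{\dag}r_{t-1} + CBu_{t-1} - r_t$ and $\xi = \xi_t$ yields exactly the LMIs displayed in the theorem.

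Finally, since $\{f_i\}_{i=1}^3$ and $\mathcal{S}$ are SD representable by hypothesis and the newly introduced constraints are LMIs or linear inequalities (hence SD representable), the feasible set of the reformulation is an intersection of finitely many SD representable sets. Lemma \ref{lemma:1} then delivers the SDP formulation after an epigraphic lift of the $f_i$'s into a linear objective. The main obstacle I expect is the degenerate case $\xi_t = 0$ in the LMI-norm equivalence, where Lemma \ref{lemma:3} is not directly applicable because its hypothesis on the leading block being nonsingular is violated; Lemma \ref{lemma:2} is designed precisely to dispatch this boundary case.
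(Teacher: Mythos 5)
Your proposal is correct and follows essentially the same route as the paper's own proof: introduce slacks $\xi_{t}$ for the cumulative-error constraint, then establish the equivalence between $\Vert v\Vert_{2}\leq\xi_{t}$ and the LMI by the same two-case split ($\xi_{t}>0$ via the Schur complement of Lemma \ref{lemma:3}, and the degenerate case $\xi_{t}=0$ via Lemma \ref{lemma:2}), concluding with SD representability and Lemma \ref{lemma:1}. No gaps.
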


\begin{proof}
(\ref{AMOPUL}) is equivalent to
\begin{align*}
\min_{A, U, \omega, \left\{\xi_{t}\right\}_{t=1}^{N}}&~~ \lambda_{1}f_{1}\left(A\right)+\lambda_{2}f_{2}\left(U\right)+\lambda_{3}f_{3}\left(\omega\right)\\
{\rm s.t.}~~~~~&~~
\left\Vert CAC^{\dag}r_{t-1}+CBu_{t-1}-r_{t}\right\Vert_{2}^{2}\leq\xi_{t}^{2},~~ \xi_{t}\geq0,~~t=1, 2, \ldots, N,\\
&~~\sum_{t=1}^{N}\xi_{t}\leq \omega,\\
&~~(A,U,\omega)\in \mathcal{S}\nonumber.
\end{align*}
We now prove that the first constraint in the above reformulation is equivalent to
\begin{align*}
\begin{pmatrix}
\xi_{t}I & CAC^{\dag}r_{t-1}+CBu_{t-1}-r_{t}\\
\left(CAC^{\dag}r_{t-1}+CBu_{t-1}-r_{t}\right)^{\mathrm{T}} & \xi_{t}
\end{pmatrix}\in\textbf{S}_{+}^{p+1},~~t=1, 2, \ldots, N.
\end{align*}
For each $t=1, 2, \ldots, N$, if $\xi_{t}=0$, then
\begin{align*}
&\left\Vert CAC^{\dag}r_{t-1}+CBu_{t-1}-r_{t}\right\Vert_{2}^{2}\leq\xi_{t}^{2}, \xi_{t}=0\\
\Longleftrightarrow& ~CAC^{\dag}r_{t-1}+CBu_{t-1}-r_{t}=\boldsymbol{0}\\
\Longleftrightarrow& ~\begin{pmatrix}
O & CAC^{\dag}r_{t-1}+CBu_{t-1}-r_{t}\\
\left(CAC^{\dag}r_{t-1}+CBu_{t-1}-r_{t}\right)^{\mathrm{T}} & 0
\end{pmatrix}\in\textbf{S}_{+}^{p+1}\\
\Longleftrightarrow& ~\begin{pmatrix}
\xi_{t}I & CAC^{\dag}r_{t-1}+CBu_{t-1}-r_{t}\\
\left(CAC^{\dag}r_{t-1}+CBu_{t-1}-r_{t}\right)^{\mathrm{T}} & \xi_{t}
\end{pmatrix}\in\textbf{S}_{+}^{p+1},
\end{align*}
where the second equivalency follows from Lemma \ref{lemma:2}.

If $\xi_{t}>0$, then
\begin{align*}
&\left\Vert CAC^{\dag}r_{t-1}+CBu_{t-1}-r_{t}\right\Vert_{2}^{2}\leq\xi_{t}^{2},~~ \xi_{t}>0\\
\Longleftrightarrow& ~\xi_{t}-\left(CAC^{\dag}r_{t-1}+CBu_{t-1}-r_{t}\right)^{\mathrm{T}}(\xi_{t}I)^{-1}\left(CAC^{\dag}r_{t-1}+CBu_{t-1}-r_{t}\right)\geq0,~~ \xi_{t}>0\\
\Longleftrightarrow& ~\xi_{t}-\left(CAC^{\dag}r_{t-1}+CBu_{t-1}-r_{t}\right)^{\mathrm{T}}(\xi_{t}I)^{-1}\left(CAC^{\dag}r_{t-1}+CBu_{t-1}-r_{t}\right)\geq0,~~ \xi_{t}I\in\textbf{S}_{++}^{p}\\
\Longleftrightarrow& ~\begin{pmatrix}
\xi_{t}I & CAC^{\dag}r_{t-1}+CBu_{t-1}-r_{t}\\
\left(CAC^{\dag}r_{t-1}+CBu_{t-1}-r_{t}\right)^{\mathrm{T}} & \xi_{t}
\end{pmatrix}\in\textbf{S}_{+}^{p+1},
\end{align*}
where the second equivalency follows from the fact that a symmetric matrix is positive definite if and only if all of its eigenvalues are positive \cite{Barrett2014}, and the third equivalency follows from Lemma \ref{lemma:3}. Therefore, we obtain the equivalent reformulation.
According to the SD representability assumptions and Lemma \ref{lemma:1}, (\ref{AMOPUL}) becomes an SDP problem.
\end{proof}

The computational tractability of SDP problems \cite{BenTal2001, Nesterov1994, Terlaky1996} and Theorem \ref{theorem:1} assure that (\ref{AMOPUL}) is polynomial-time solvable. Once an optimal solution $(A^{\rm a*}, U^{\rm a*}, \omega^{\rm a*})$ of (\ref{AMOPUL}) is obtained, it is worth estimating the cumulative error between the system outputs and the references in (\ref{MOPUL}). An upper bound for this gap is provided in the next theorem.
\begin{thm}\label{theorem:2}
If there exist $\beta>0$ and $\omega^{\rm u}>0$ such that
$\Vert CA^{\rm a}C^{\dag}\Vert_{2}\leq\beta$ and $\omega^{\rm a}\leq\omega^{\rm u}$ for any feasible $A^{\rm a}, \omega^{\rm a}$ of (\ref{AMOPUL}), then for each optimal solution $(A^{\rm a*}, U^{\rm a*}, \omega^{\rm a*})$ of (\ref{AMOPUL}), by letting $x_{t}=A^{\rm a*}x_{t-1}+Bu_{t-1}^{\rm a*}, y_{t}=Cx_{t}, t=1,2, \ldots, N$, we have
$$\sum_{t=1}^{N}\Vert y_{t}-r_{t}\Vert_{2}\leq \left(\sum_{i=0}^{N-1}\beta^{i}\right)\omega^{\rm u}.$$
\end{thm}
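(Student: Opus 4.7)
The plan is to bound the true cumulative error by the approximate cumulative error (which is already controlled by $\omega^{\rm u}$) plus a propagated discrepancy that comes from the substitution $x_{t-1} \mapsto C^{\dag} r_{t-1}$ used in (\ref{AMOPUL}). The key observation is that because $C$ has full column rank, $C^{\dag}C = I$, so the true state can be recovered exactly from the true output via $x_{t-1} = C^{\dag} y_{t-1}$. Combining this with the AMOPUL initialization $r_0 = Cx_0$ (which forces $y_0 = r_0$), the mismatch between $y_t$ and $y_t^{\rm a}$ reduces to a clean linear recursion in the previous-step output error.

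Concretely, I first subtract the two dynamics. Using $y_t = CA^{\rm a*} x_{t-1} + CBu_{t-1}^{\rm a*}$ and $y_t^{\rm a} = CA^{\rm a*} C^{\dag} r_{t-1} + CB u_{t-1}^{\rm a*}$, and substituting $x_{t-1} = C^{\dag} y_{t-1}$, I obtain the identity
\begin{equation*}
y_t - y_t^{\rm a} = CA^{\rm a*} C^{\dag}\bigl(y_{t-1} - r_{t-1}\bigr), \qquad t = 1, 2, \ldots, N.
\end{equation*}
Applying the triangle inequality and the submultiplicativity of the spectral norm together with the hypothesis $\Vert CA^{\rm a*} C^{\dag}\Vert_2 \le \beta$ yields the recursion
\begin{equation*}
\Vert y_t - r_t\Vert_2 \;\le\; \beta\,\Vert y_{t-1} - r_{t-1}\Vert_2 \;+\; \Vert y_t^{\rm a} - r_t\Vert_2,
\end{equation*}
with base case $\Vert y_0 - r_0\Vert_2 = 0$ guaranteed by the first constraint in (\ref{AMOPUL}).

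Unrolling the recursion gives $\Vert y_t - r_t\Vert_2 \le \sum_{j=1}^{t} \beta^{\,t-j}\,\Vert y_j^{\rm a} - r_j\Vert_2$. Summing over $t = 1, \ldots, N$ and switching the order of summation,
\begin{equation*}
\sum_{t=1}^{N}\Vert y_t - r_t\Vert_2 \;\le\; \sum_{j=1}^{N}\Vert y_j^{\rm a} - r_j\Vert_2 \sum_{k=0}^{N-j}\beta^{k} \;\le\; \Bigl(\sum_{i=0}^{N-1}\beta^{i}\Bigr)\sum_{j=1}^{N}\Vert y_j^{\rm a} - r_j\Vert_2.
\end{equation*}
The last sum is at most $\omega^{\rm a*}$ by feasibility in (\ref{AMOPUL}) and at most $\omega^{\rm u}$ by hypothesis, which delivers the claimed bound.

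I expect the only mildly delicate step to be justifying $x_{t-1} = C^{\dag}y_{t-1}$: it relies on the standing assumption that $C$ has full column rank (so $C^{\dag}C = I_n$), and on using $y_0 = C x_0 = r_0$ to make the inequality $\Vert y_0 - r_0\Vert_2 = 0$ start the recursion cleanly. Everything else is a routine triangle-inequality / geometric-series bookkeeping argument.
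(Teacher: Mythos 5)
Your proof is correct and follows essentially the same route as the paper's: both hinge on inserting $CA^{\rm a*}C^{\dag}r_{t-1}$, using $C^{\dag}C=I$ to rewrite the propagated term as $CA^{\rm a*}C^{\dag}(y_{t-1}-r_{t-1})$, and unrolling a geometric recursion seeded by $y_0=r_0$. The only difference is bookkeeping — you unroll a per-step recursion and swap the order of summation, while the paper iterates an inequality on the partial sums $\sum_{t=1}^{n}\Vert y_t-r_t\Vert_2$ — and both land on the same bound.
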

\begin{proof}
When $C$ has full column rank, we have $x_{0}=C^{\dag}r_{0}$ and $C^{\dag}C=I$ \cite{Penrose1956}. Hence
\begin{align*}
&\sum_{t=1}^{N}\left\Vert y_{t}-r_{t}\right\Vert_{2}\\
=&\sum_{t=1}^{N}\left\Vert C(A^{\rm a*}x_{t-1}+Bu_{t-1}^{\rm a*})-CA^{\rm a*}C^{\dag}r_{t-1}+CA^{\rm a*}C^{\dag}r_{t-1}-r_{t}\right\Vert_{2}\\
\leq&\sum_{t=1}^{N}\left\Vert CA^{\rm a*}C^{\dag}\left(Cx_{t-1}-r_{t-1}\right)\right\Vert_{2}+\sum_{t=1}^{N}\left\Vert y_{t}^{\rm a}-r_{t}\right\Vert_{2}\\
\leq&\sum_{t=1}^{N}\left\Vert CA^{\rm a*}C^{\dag}\left(y_{t-1}-r_{t-1}\right)\right\Vert_{2}+\omega^{\rm a*}\\
\leq&\left\Vert CA^{\rm a*}C^{\dag}\right\Vert_{2}\left(\sum_{t=1}^{N-1}\left\Vert y_{t}-r_{t}\right\Vert_{2}\right)+\omega^{\rm a*}.
\end{align*}
With the same arguments, we have
\begin{align*}
\sum_{t=1}^{n}\left\Vert y_{t}-r_{t}\right\Vert_{2}\leq\left\Vert CA^{\rm a*}C^{\dag}\right\Vert_{2}\left(\sum_{t=1}^{n-1}\left\Vert y_{t}-r_{t}\right\Vert_{2}\right)+\omega^{\rm a*},~~n=2, 3, \ldots, N-1.
\end{align*}
Consequently, we have
\begin{align*}
&\sum_{t=1}^{N}\left\Vert y_{t}-r_{t}\right\Vert_{2}\\
\leq&\left\Vert CA^{\rm a*}C^{\dag}\right\Vert_{2}^{N-1}\left\Vert y_{1}-r_{1}\right\Vert_{2}+\left(\sum\limits_{i=0}^{N-2}\left\Vert CA^{\rm a*}C^{\dag}\right\Vert_{2}^{i}\right)\omega^{\rm a*}\\
=&\left\Vert CA^{\rm a*}C^{\dag}\right\Vert_{2}^{N-1}\left\Vert y_{1}^{\rm a}-r_{1}\right\Vert_{2}+\left(\sum\limits_{i=0}^{N-2}\left\Vert CA^{\rm a*}C^{\dag}\right\Vert_{2}^{i}\right)\omega^{\rm a*}\\
\leq&\left(\sum\limits_{i=0}^{N-1}\left\Vert CA^{\rm a*}C^{\dag}\right\Vert_{2}^{i}\right)\omega^{\rm a*}\\
\leq&\left(\sum\limits_{i=0}^{N-1}\beta^{i}\right)\omega^{\rm u}.
\end{align*}
\end{proof}
\begin{rem}\label{remark:2}
For the application problems in Section \ref{section:2}, we may assume that the variables $A$ and $\omega$ are bounded. Then the assumptions $\Vert CA^{\rm a}C^{\dag}\Vert_{2}\leq\beta$ and $\omega^{\rm a}\leq\omega^{\rm u}$ for any feasible $A^{\rm a}$ and $\omega^{\rm a}$ of (\ref{AMOPUL}) in Theorem \ref{theorem:2} are satisfied. In this case, additional constraints such as $\Vert A\Vert_{2}\leq\alpha$ and $\omega\leq\omega^{\rm u}$ with large enough $\alpha$ and $\omega^{\rm u}$ can be added to $\mathcal{S}$ if necessary.
\end{rem}
When the control level $\omega$ in (\ref{MOPUL}) is fixed as a constant, we see the feasibility of an (\ref{AMOPUL}) solution to (\ref{MOPUL}) as below.
\begin{thm}\label{theorem:3}
Suppose that $\mathcal{S}\subseteq\{(A, U, \omega)|\omega=\omega^{\rm c}\}$ in (\ref{MOPUL}) with $\omega^{c}>0$ being given. Replace the constraint $\sum_{t=1}^{N}\Vert y_{t}^{\rm a}-r_{t}\Vert_{2}\leq \omega^{\rm c}$ with $\sum_{t=1}^{N}\Vert y_{t}^{\rm a}-r_{t}\Vert_{2}\leq\omega^{\rm c}/(\sum_{i=0}^{N-1}\beta^{i})$ in the SDP approximation (\ref{AMOPUL}) for some $\beta>0$.
If $\Vert CA^{\rm a}C^{\dag}\Vert_{2}\leq\beta$ for any feasible $A^{\rm a}$ of (\ref{AMOPUL}), then any feasible solution $(A^{\rm a}, U^{\rm a})$ of (\ref{AMOPUL}) is feasible to (\ref{MOPUL}), and the optimal objective value of (\ref{AMOPUL}) becomes an upper bound for that of (\ref{MOPUL}).
\end{thm}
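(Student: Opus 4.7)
The plan is to mirror the telescoping argument from the proof of Theorem \ref{theorem:2}, with the tightened right-hand side in the modified (\ref{AMOPUL}) playing the role of $\omega^{\rm u}$. Feasibility of any $(A^{\rm a}, U^{\rm a})$ for (\ref{MOPUL}) reduces to verifying the cumulative-error constraint $\sum_{t=1}^{N}\|y_{t} - r_{t}\|_{2} \le \omega^{\rm c}$ for the trajectory generated by $x_{t} = A^{\rm a}x_{t-1} + Bu_{t-1}^{\rm a}$, $y_{t} = Cx_{t}$: the dynamics and output equation hold by construction, and $(A^{\rm a}, U^{\rm a}, \omega^{\rm c}) \in \mathcal{S}$ is inherited from (\ref{AMOPUL}) feasibility because the hypothesis $\mathcal{S} \subseteq \{(A, U, \omega)\mid \omega = \omega^{\rm c}\}$ forces $\omega^{\rm a} = \omega^{\rm c}$.

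For the cumulative-error bound, I would first invoke $C^{\dag}C = I$ (from the full column rank of $C$) and $x_{0} = C^{\dag}r_{0}$ to establish the one-step identity
$$y_{t} - r_{t} = CA^{\rm a}C^{\dag}(y_{t-1} - r_{t-1}) + (y_{t}^{\rm a} - r_{t}),$$
where $y_{t}^{\rm a} = CA^{\rm a}C^{\dag}r_{t-1} + CBu_{t-1}^{\rm a}$ are the (\ref{AMOPUL}) approximate outputs. Taking $\|\cdot\|_{2}$, using $\|CA^{\rm a}C^{\dag}\|_{2} \le \beta$, summing over $t$, and noting that $y_{0} = Cx_{0} = r_{0}$ eliminates the boundary term, I obtain the recursion $\sum_{t=1}^{n}\|y_{t} - r_{t}\|_{2} \le \beta\sum_{t=1}^{n-1}\|y_{t} - r_{t}\|_{2} + \sum_{t=1}^{N}\|y_{t}^{\rm a} - r_{t}\|_{2}$ for each $n = 2, \ldots, N$. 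Iterating down to $\|y_{1} - r_{1}\|_{2} = \|y_{1}^{\rm a} - r_{1}\|_{2}$ yields
$$\sum_{t=1}^{N}\|y_{t} - r_{t}\|_{2} \le \left(\sum_{i=0}^{N-1}\beta^{i}\right)\sum_{t=1}^{N}\|y_{t}^{\rm a} - r_{t}\|_{2},$$
and invoking the new constraint $\sum_{t=1}^{N}\|y_{t}^{\rm a} - r_{t}\|_{2} \le \omega^{\rm c}/\sum_{i=0}^{N-1}\beta^{i}$ collapses the right-hand side to exactly $\omega^{\rm c}$.

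Once feasibility of $(A^{\rm a}, U^{\rm a}, \omega^{\rm c})$ for (\ref{MOPUL}) is established, the upper-bound assertion on optimal values is immediate because the two problems share the same objective $\lambda_{1}f_{1}(A) + \lambda_{2}f_{2}(U) + \lambda_{3}f_{3}(\omega)$, and the feasible region of the modified (\ref{AMOPUL}) (after projecting onto $(A, U, \omega)$) is a subset of the feasible region of (\ref{MOPUL}). There is no genuine technical obstacle here; the argument is essentially a repackaging of Theorem \ref{theorem:2}, and the only delicate point is the bookkeeping that produces precisely $\omega^{\rm c}$ on the right (rather than a scaled version), which is exactly what motivates the denominator $\sum_{i=0}^{N-1}\beta^{i}$ in the tightened constraint.
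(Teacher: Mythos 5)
Your proposal is correct and follows essentially the same route as the paper: the paper's proof likewise reuses the telescoping recursion from Theorem \ref{theorem:2} with the tightened bound $\omega^{\rm c}/\sum_{i=0}^{N-1}\beta^{i}$ in place of $\omega^{\rm a*}$, arrives at $\sum_{t=1}^{N}\Vert y_{t}-r_{t}\Vert_{2}\leq(\sum_{i=0}^{N-1}\beta^{i})\,\omega^{\rm c}/\sum_{i=0}^{N-1}\beta^{i}=\omega^{\rm c}$, and concludes feasibility and the upper bound exactly as you do. Your write-up is, if anything, slightly more explicit about the one-step identity, the boundary term $y_{0}=r_{0}$, and the membership $(A^{\rm a},U^{\rm a},\omega^{\rm c})\in\mathcal{S}$, which the paper leaves implicit.
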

\begin{proof}
With similar arguments as in the proof of Theorem \ref{theorem:2}, we have
\begin{align*}
\sum_{t=1}^{n}\left\Vert y_{t}-r_{t}\right\Vert_{2}\leq\left\Vert CA^{\rm a}C^{\dag}\right\Vert_{2}\left(\sum_{t=1}^{n-1}\left\Vert y_{t}-r_{t}\right\Vert_{2}\right)+\frac{\omega^{\rm c}}{\sum_{i=0}^{N-1}\beta^{i}},~~n=2, 3, \ldots, N,
\end{align*}
for any feasible solution $(A^{\rm a}, U^{\rm a})$ of (AMOPUL).
Hence
\begin{align*}
\sum_{t=1}^{N}\left\Vert y_{t}-r_{t}\right\Vert_{2}\leq&\left(\sum\limits_{i=0}^{N-1}\left\Vert CA^{\rm a}C^{\dag}\right\Vert_{2}^{i}\right)\frac{\omega^{\rm c}}{\sum_{i=0}^{N-1}\beta^{i}}\\
\leq&\left(\sum\limits_{i=0}^{N-1}\beta^{i}\right)\frac{\omega^{\rm c}}{\sum_{i=0}^{N-1}\beta^{i}}\\
=&~~\omega^{\rm c},
\end{align*}
which implies the feasibility of $(A^{\rm a}, U^{\rm a})$ to (\ref{MOPUL}). Then the optimal objective value of (\ref{AMOPUL}) becomes an upper bound for that of (\ref{MOPUL}).
\end{proof}
\noindent The above theorem shows that the control level plays an important role in (\ref{AMOPUL}). Numerically, we will study this issue further in Section \ref{section:4}.
\begin{rem}
For Theorem \ref{theorem:3}, if $\mathcal{S}\subseteq\{(A, U, \omega)|\Vert A\Vert_2\leq\alpha\}$, by taking $\beta=\alpha\Vert C\Vert_2\Vert C^{\dag}\Vert_2$, the assumption $\Vert CA^{\rm a}C^{\dag}\Vert_{2}\leq\beta$ is satisfied.
\end{rem}
\begin{rem}\label{remark:3}
A weighted cumulative error $\sum_{t=1}^{N}\Vert y_{t}-r_{t}\Vert_{Q}$ can also be used in (\ref{MOPUL}) by replacing $\Vert\cdot\Vert_{2}$ with $\Vert\cdot\Vert_{Q}$. The corresponding approximation model can then be constructed by using $\Vert\cdot\Vert_{Q}$ in (\ref{AMOPUL}). With similar arguments as in the proof of Theorem \ref{theorem:1}, its approximation is an SDP problem. As $\eta_{1}\Vert x\Vert_{Q}\leq\Vert x\Vert_{2}\leq\eta_{2}\Vert x\Vert_{Q}$ holds for all $x\in\mathbb{R}^{p}$ and some $\eta_{1}, \eta_{2}>0$ \cite[1.12, Proposition 4]{Zeidler1995}, Theorem \ref{theorem:3} follows when an upper bound of $\sum_{t=1}^{N}\Vert y_{t}^{\rm a}-r_{t}\Vert_{Q}$ is given by
\begin{align*}
\frac{\omega^{\rm c}}{(\frac{\eta_{2}\beta}{\eta_{1}})^{N-1}+\sum_{i=0}^{N-2}\frac{\eta_{2}^{i+1}\beta^{i}}{\eta_{1}^{i+1}}}.
\end{align*}
\end{rem}
\begin{rem}\label{remark:4}
When the matrix $A$ is time-varying in (\ref{MOPUL}), i.e.,  $x_{t}=A_{t-1}x_{t-1}+Bu_{t-1}$, we can also get a similar SDP approximation with similar discussions.
\end{rem}

\subsection{Two special cases}\label{subsection:3.3}
We study two special cases of (\ref{MOPUL}) associated with specific application problems in Section \ref{section:2} focusing on the reference outputs fitting and transition matrix estimation, respectively.
\subsubsection{MOPUL1}\label{subsubsection:3.3.1}
To fit the given reference outputs $\{r_t\}_{t=1}^{N}$, we consider the following (\ref{MOPUL1}) problem to minimize the cumulative error of reference outputs:
\begin{align}
\begin{split}\label{MOPUL1}
\min_{A, U}&~~\sum_{t=1}^{N}\left\Vert y_{t}-r_{t}\right\Vert_{2}\\
{\rm s.t.}
&~~x_{t}=Ax_{t-1}+Bu_{t-1},~~t=1, 2,\ldots, N,\\
&~~y_{t}=Cx_{t},~~t=0, 1, \ldots, N,\\
&~~\left(A, U\right)\in\mathcal{S}_{\rm\scriptscriptstyle M1},
\end{split}\tag{MOPUL1}
\end{align}
where the transition matrix $A\in\mathbb{R}^{n\times n}$ and control $U\coloneqq(u_{0}, u_{1}, \ldots, u_{N-1})\in\mathbb{R}^{m\times N}$ are decision variables, $\mathcal{S}_{\rm\scriptscriptstyle M1}$ is SD representable. It is a special case of (\ref{MOPUL}) by setting $\lambda_{1}=\lambda_{2}=0$, $\lambda_{3}=1$, $f_{3}(\omega)=\omega$, and $\mathcal{S}=\mathcal{S}_{\rm\scriptscriptstyle M1}\times\mathbb{R}_+$. Notice that (\ref{O-MPC}) with $\lambda=0$ in Subsection \ref{subsection:2.1},
(\ref{O-COVID}) in Subsection \ref{subsection:2.2}, (\ref{O-Markov}) in Subsection \ref{subsection:2.3}, and (\ref{O-IN/OUTPUT1}) in Subsection \ref{subsection:2.4} are four examples of (\ref{MOPUL1}). An SDP approximation model of (\ref{MOPUL1}) becomes the following:
\begin{align}
\begin{split}\label{AMOPUL1}
\min_{A, U}&~~\sum_{t=1}^{N}\left\Vert y_{t}^{\rm a}-r_{t}\right\Vert_{2}\nonumber\\
{\rm s.t.}
&~~r_{0}=Cx_{0},~~x_{0}^{\rm a}=x_{0},\\
&~~x_{t}^{\rm a}=AC^{\dag}r_{t-1}+Bu_{t-1},~~t=1, 2,\ldots, N,\\
&~~y_{t}^{\rm a}=Cx_{t}^{\rm a},~~t=0, 1, \ldots, N,\\
&~~\left(A, U\right)\in\mathcal{S}_{\rm\scriptscriptstyle M1},
\end{split}\tag{AMOPUL1}
\end{align}
where $A$ and $U\coloneqq(u_{0}, u_{1}, \ldots, u_{N-1})$ are decision variables.

The next theorem provides an upper bound for the optimal objective value $v_{\rm\scriptscriptstyle A1}^*$ of (\ref{AMOPUL1}) assuming that the given reference outputs are accurate.
\begin{thm}\label{theorem:4}
Let $\{\epsilon_{t}\}_{t=1}^N$ be $N$ nonnegative constants. For problem (\ref{AMOPUL1}), if the set
\begin{align}\label{Z_epsilon}
\mathcal{Z}_\epsilon\coloneqq\left\{(A, U)\in\mathcal{S}_{\rm\scriptscriptstyle M1}\left|\begin{array}{l}U=(u_0, u_1,\dots, u_{N-1}),\\
\hat{y}_{0}=Cx_{0},\\
\hat{y}_{t}=CAC^{\dag}\hat{y}_{t-1}+CBu_{t-1},\\
\Vert \hat{y}_t-r_{t}\Vert_{2}\leq\epsilon_{t},~~t=1, 2, \ldots, N\end{array}\right.\right\}\neq\emptyset,
\end{align}
then we have
\begin{align*}
v_{\rm\scriptscriptstyle A1}^*\leq\left(1+\gamma\right)\sum_{t=1}^{N-1}\epsilon_{t}+\epsilon_{N},
\end{align*}
where $\gamma\coloneqq\inf_{\scriptscriptstyle(A, U)\in \mathcal{Z}_\epsilon}\Vert CAC^{\dag}\Vert_2$.
Moreover, if $\{r_{t}\}_{t=1}^N$ are accurate system references, i.e., there exists an $(A, U)\in\mathcal{S}_{\rm\scriptscriptstyle M1}$ such that $\hat{y}_{t}=r_{t}, t=1, 2, \ldots, N$, then $v_{\rm\scriptscriptstyle A1}^*=0$.
\end{thm}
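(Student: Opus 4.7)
The plan is to take any $(A, U) \in \mathcal{Z}_\epsilon$, which by construction lies in $\mathcal{S}_{\rm\scriptscriptstyle M1}$ and hence is feasible for (\ref{AMOPUL1}), and to bound the corresponding (\ref{AMOPUL1}) objective by $(1+\|CAC^\dag\|_2)\sum_{t=1}^{N-1}\epsilon_t + \epsilon_N$; then taking the infimum over $(A,U)\in\mathcal{Z}_\epsilon$ yields the stated inequality. The ``moreover'' clause will drop out immediately by applying the first part with all $\epsilon_t=0$.

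I would start by writing the (\ref{AMOPUL1}) iterate $y_t^{\rm a} = CAC^\dag r_{t-1} + CBu_{t-1}$ alongside the iterate from (\ref{Z_epsilon}), namely $\hat{y}_t = CAC^\dag \hat{y}_{t-1} + CBu_{t-1}$. Since $\hat{y}_0 = Cx_0 = r_0$, the $t=1$ step gives $y_1^{\rm a} = \hat{y}_1$, and therefore $\|y_1^{\rm a} - r_1\|_2 \leq \epsilon_1$. For $t \geq 2$, subtracting the two recursions cancels the $CBu_{t-1}$ term and yields the key single-step identity $y_t^{\rm a} - \hat{y}_t = CAC^\dag (r_{t-1} - \hat{y}_{t-1})$. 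Because the right-hand side involves the given reference $r_{t-1}$ rather than the previous iterate $y_{t-1}^{\rm a}$, this relation is not self-referential in $y_t^{\rm a}-\hat{y}_t$ and so does not compound into a geometric factor. The triangle inequality and submultiplicativity of the spectral norm then give $\|y_t^{\rm a} - r_t\|_2 \leq \|CAC^\dag\|_2 \epsilon_{t-1} + \epsilon_t$ for each $t \geq 2$.

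Summing over $t = 1, \ldots, N$ and reindexing produces $\sum_{t=1}^N \|y_t^{\rm a} - r_t\|_2 \leq (1+\|CAC^\dag\|_2)\sum_{t=1}^{N-1} \epsilon_t + \epsilon_N$. Since this holds for every $(A, U) \in \mathcal{Z}_\epsilon$ and the additive constant $\sum_{t=1}^{N-1}\epsilon_t + \epsilon_N$ is independent of $(A,U)$, the infimum passes cleanly through the $\|CAC^\dag\|_2$ term and delivers $v_{\rm\scriptscriptstyle A1}^* \leq (1+\gamma)\sum_{t=1}^{N-1}\epsilon_t + \epsilon_N$. For the ``moreover'' part, the accuracy hypothesis is exactly the statement $\mathcal{Z}_0 \neq \emptyset$, so applying the bound with $\epsilon_t \equiv 0$ gives $v_{\rm\scriptscriptstyle A1}^* \leq 0$, which combined with nonnegativity of the objective forces $v_{\rm\scriptscriptstyle A1}^* = 0$. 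The only mildly nonobvious step is recognizing the algebraic identity in paragraph two; once it is in hand the remainder is routine bookkeeping, so I do not foresee a substantive obstacle.
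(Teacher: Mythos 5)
Your proof is correct and follows essentially the same route as the paper's: the identity $y_t^{\rm a}-\hat{y}_t = CAC^{\dag}(r_{t-1}-\hat{y}_{t-1})$ you obtain by subtracting the two recursions is exactly the decomposition the paper gets by inserting $\pm\hat{y}_{t-1}$ and $\pm\hat{y}_t$, and the subsequent triangle inequality, summation, and passage to the infimum are identical. The handling of the ``moreover'' clause via $\epsilon_t\equiv 0$ and nonnegativity of the objective is also the intended argument.
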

\begin{proof}
For each approximate iteration $y_{t}^{\rm a}=CAC^{\dag}r_{t-1}+CBu_{t-1}, t=1, 2, \ldots, N$ with $(A, U)\in\mathcal{Z}_\epsilon$,
\begin{align*}
&\sum\limits_{t=1}^{N}\left\Vert y_{t}^{\rm a}-r_{t}\right\Vert_{2}\\
=&\sum\limits_{t=1}^{N}\left\Vert CAC^{\dag}r_{t-1}+CBu_{t-1}-r_{t}\right\Vert_{2}\\
=&\sum\limits_{t=1}^{N}\left\Vert CAC^{\dag}(r_{t-1}-\hat{y}_{t-1}+\hat{y}_{t-1})+CBu_{t-1}-(r_{t}-\hat{y}_{t}+\hat{y}_{t})\right\Vert_{2}\\
\leq&\sum_{t=1}^{N}\left\Vert CAC^{\dag}\left(\hat{y}_{t-1}-r_{t-1}\right)\right\Vert_{2}+\sum_{t=1}^{N}\left\Vert \hat{y}_{t}-r_{t}\right\Vert_{2}\\
\leq&\left\Vert CAC^{\dag}\right\Vert_{2}\sum_{t=1}^{N-1}\left\Vert \hat{y}_{t}-r_{t}\right\Vert_{2}+\sum_{t=1}^{N}\left\Vert \hat{y}_{t}-r_{t}\right\Vert_{2}\\
\leq&\left(1+\Vert CAC^{\dag}\Vert_{2}\right)\sum_{t=1}^{N-1}\epsilon_{t}+\epsilon_{N}.
\end{align*}
Thus
\begin{align*}
v_{\rm\scriptscriptstyle A1}^*&\leq\sum\limits_{t=1}^{N}\left\Vert y_{t}^{\rm a}-r_{t}\right\Vert_{2}\leq\left(1+\Vert CAC^{\dag}\Vert_{2}\right)\sum_{t=1}^{N-1}\epsilon_{t}+\epsilon_{N}
\end{align*}
holds for each $(A, U)\in\mathcal{Z}_\epsilon$.
Consequently, we have
\begin{align*}
v_{\rm\scriptscriptstyle A1}^*&\leq\inf_{\scriptscriptstyle(A, U)\in \mathcal{Z}_\epsilon}\left\{\left(1+\Vert CAC^{\dag}\Vert_{2}\right)\sum_{t=1}^{N-1}\epsilon_{t}+\epsilon_{N}\right\}=\left(1+\gamma\right)\sum_{t=1}^{N-1}\epsilon_{t}+\epsilon_{N},
\end{align*}
and the rest of theorem follows.
\end{proof}
This theorem shows that the accuracy of reference outputs is important to (\ref{AMOPUL1}).
Given a sequence of accurate reference outputs, (\ref{AMOPUL1}) can fit them without error. Numerically, we will study this issue further in Section \ref{section:4}.

Let $v_{\rm\scriptscriptstyle M1}^*$ and $v_{\rm\scriptscriptstyle A1}^*$ be the optimal objective values of (\ref{MOPUL1}) and (\ref{AMOPUL1}), respectively, some bounds on $v_{\rm\scriptscriptstyle A1}^*/v_{\rm \scriptscriptstyle M1}^*$ are provided in the next two results.
\begin{thm}\label{theorem:5}
If problem (\ref{MOPUL1}) is attainable, then we have
\begin{align*}
v_{\rm\scriptscriptstyle A1}^*\leq\left(1+\gamma^*\right)v_{\rm\scriptscriptstyle M1}^*,
\end{align*}
where $\gamma^*\coloneqq\inf_{\scriptscriptstyle(A^*, U^*)\in\mathcal{F}_{\rm\scriptscriptstyle M1}^{*}}\inf_{\scriptscriptstyle(A, U)\in \mathcal{Z}_{\epsilon}^*}\Vert CAC^{\dag}\Vert_2$ with $\mathcal{F}_{\rm\scriptscriptstyle M1}^{*}$ being the optimal solution set of (\ref{MOPUL1}) and $\mathcal{Z}_{\epsilon}^*$ being the set defined in (\ref{Z_epsilon}) in which $\epsilon_t=\Vert \hat{y}_{t}^*-r_{t}\Vert_{2}$, $\hat{y}_{t}^*=CA^*C^{\dag}\hat{y}_{t-1}^*+CBu^*_{t-1}, t=1, 2, \ldots, N$, and $\hat{y}_{0}^*=Cx_{0}$ for each $(A^*, U^*)\in\mathcal{F}_{\rm\scriptscriptstyle M1}^{*}$.
\end{thm}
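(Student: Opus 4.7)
The plan is to reduce Theorem~\ref{theorem:5} to Theorem~\ref{theorem:4} by exhibiting, for each optimal solution of (\ref{MOPUL1}), a natural choice of tolerances $\{\epsilon_t\}_{t=1}^N$ that makes the set $\mathcal{Z}_\epsilon$ in (\ref{Z_epsilon}) nonempty and whose total size equals $v_{\rm\scriptscriptstyle M1}^*$.

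First, I would fix any $(A^*, U^*)\in\mathcal{F}_{\rm\scriptscriptstyle M1}^*$ and consider the system trajectory $\{y_t^*\}_{t=0}^N$ generated by the original dynamics $x_t=A^*x_{t-1}+Bu_{t-1}^*$, $y_t=Cx_t$. The crucial step is to verify, by induction on $t$, that the auxiliary sequence $\hat{y}_t^*$ defined by the decoupled iteration $\hat{y}_0^*=Cx_0$, $\hat{y}_t^*=CA^*C^{\dag}\hat{y}_{t-1}^*+CBu_{t-1}^*$ coincides with the true output $y_t^*$. This is exactly where the full column rank of $C$ is used: it gives $C^{\dag}C=I$, so that $CA^*C^{\dag}y_{t-1}^*=CA^*C^{\dag}Cx_{t-1}=CA^*x_{t-1}=y_t^*-CBu_{t-1}^*$, closing the induction.

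Second, I would set $\epsilon_t\coloneqq\Vert\hat{y}_t^*-r_t\Vert_2=\Vert y_t^*-r_t\Vert_2$. By construction $(A^*,U^*)\in\mathcal{Z}_\epsilon$, so $\mathcal{Z}_\epsilon^*\neq\emptyset$ and Theorem~\ref{theorem:4} applies. It yields
\begin{align*}
v_{\rm\scriptscriptstyle A1}^*\leq (1+\gamma(A^*,U^*))\sum_{t=1}^{N-1}\epsilon_t+\epsilon_N\leq (1+\gamma(A^*,U^*))\sum_{t=1}^{N}\epsilon_t=(1+\gamma(A^*,U^*))\,v_{\rm\scriptscriptstyle M1}^*,
\end{align*}
where $\gamma(A^*,U^*)\coloneqq\inf_{(A,U)\in\mathcal{Z}_\epsilon^*}\Vert CAC^{\dag}\Vert_2\geq 0$ and the second inequality simply absorbs $\epsilon_N$ into the larger coefficient $(1+\gamma(A^*,U^*))$. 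The third and final step is to take the infimum of the right-hand side over $(A^*,U^*)\in\mathcal{F}_{\rm\scriptscriptstyle M1}^*$, which replaces $\gamma(A^*,U^*)$ by $\gamma^*$ and delivers the desired bound $v_{\rm\scriptscriptstyle A1}^*\leq (1+\gamma^*)\,v_{\rm\scriptscriptstyle M1}^*$.

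The main obstacle I anticipate is conceptual rather than computational, namely the identification $\hat{y}_t^*=y_t^*$: it is tempting to treat $\hat{y}_t$ and $y_t$ as unrelated, but the full-column-rank assumption on $C$ (already recorded in the introduction) makes them equal along any feasible trajectory of (\ref{MOPUL1}). Once this identification is in hand, the remainder is a direct invocation of Theorem~\ref{theorem:4} followed by taking infima, and requires no further calculation.
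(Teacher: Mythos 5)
Your proposal is correct and follows essentially the same route as the paper: choose $\epsilon_t=\Vert\hat{y}_t^*-r_t\Vert_2$ for each optimal solution of (\ref{MOPUL1}) so that $(A^*,U^*)\in\mathcal{Z}_{\epsilon^*}\neq\emptyset$, invoke Theorem \ref{theorem:4}, and take the infimum over $\mathcal{F}_{\rm\scriptscriptstyle M1}^*$. The only difference is that you make explicit the induction showing $\hat{y}_t^*=y_t^*$ via $C^{\dag}C=I$ (and the absorption of $\epsilon_N$ into the coefficient $1+\gamma$), steps the paper leaves implicit; this is a welcome clarification rather than a different argument.
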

\begin{proof}
When (\ref{MOPUL1}) is attainable, we have $(A^*,U^*)\in \mathcal{Z}_{\scriptscriptstyle \epsilon^*}\ne\emptyset$ for each $(A^*,U^*)\in\mathcal{F}_{\rm\scriptscriptstyle M1}^{*}$. Theorem \ref{theorem:4} says that
\begin{align*}
v_{\rm\scriptscriptstyle A1}^*\leq\left(1+\inf_{\scriptscriptstyle(A, U)\in \mathcal{Z}_{\epsilon^{*}}}\Vert CAC^{\dag}\Vert_2\right)v_{\rm\scriptscriptstyle M1}^*
\end{align*}
holds for each $(A^*,U^*)\in\mathcal{F}_{\rm\scriptscriptstyle M1}^{*}$. Hence
\begin{align*}
v_{\rm\scriptscriptstyle A1}^*&\leq\inf_{\scriptscriptstyle(A^*, U^*)\in\mathcal{F}_{\rm\scriptscriptstyle M1}^{*}}\left(1+\inf_{\scriptscriptstyle(A, U)\in \mathcal{Z}_{\epsilon^{*}}}\Vert CAC^{\dag}\Vert_2\right)v_{\rm\scriptscriptstyle M1}^*=\left(1+\gamma^*\right)v_{\rm\scriptscriptstyle M1}^*.
\end{align*}
\end{proof}
\begin{thm}\label{theorem:6}
If problem (\ref{AMOPUL1}) is attainable, then we have
\begin{align*}
v_{\rm\scriptscriptstyle M1}^*\leq \left(\sum_{i=0}^{N-1}(\zeta^*)^i\right) v_{\rm\scriptscriptstyle A1}^*,
\end{align*}
where $\zeta^*\coloneqq\inf_{\scriptscriptstyle(A^{\rm a*},U^{\rm a*})\in \mathcal{F}_{\rm\scriptscriptstyle A1}^{*}}\Vert CA^{\rm a*}C^{\dag}\Vert_{2}$ with $\mathcal{F}_{\rm\scriptscriptstyle A1}^{*}$ being the optimal solution set of (\ref{AMOPUL1}).
\end{thm}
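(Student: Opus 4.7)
The plan is to mimic the telescoping argument in the proof of Theorem \ref{theorem:2}, now applied to the pair (\ref{MOPUL1})/(\ref{AMOPUL1}) where the control level is absent and $v_{\rm\scriptscriptstyle A1}^*$ itself plays the role of $\omega^{\rm a*}$. First I would pick any $(A^{\rm a*}, U^{\rm a*})\in\mathcal{F}_{\rm\scriptscriptstyle A1}^*$ and propagate it through the true (non-approximate) dynamics $x_{t}=A^{\rm a*}x_{t-1}+Bu_{t-1}^{\rm a*}$, $y_{t}=Cx_{t}$. Since $\mathcal{S}_{\rm\scriptscriptstyle M1}$ is the same in both models and all remaining (\ref{MOPUL1}) constraints are satisfied by construction, $(A^{\rm a*},U^{\rm a*})$ is feasible for (\ref{MOPUL1}); consequently $v_{\rm\scriptscriptstyle M1}^{*}\leq \sum_{t=1}^{N}\|y_{t}-r_{t}\|_{2}$, and the whole task reduces to bounding this sum by $(\sum_{i=0}^{N-1}\|CA^{\rm a*}C^{\dag}\|_{2}^{i})\,v_{\rm\scriptscriptstyle A1}^*$.

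Next I would derive the key error recursion
$$y_{t}-r_{t}=CA^{\rm a*}C^{\dag}(y_{t-1}-r_{t-1})+(y_{t}^{\rm a}-r_{t}),\quad t=1,2,\ldots,N,$$
by adding and subtracting $CA^{\rm a*}C^{\dag}r_{t-1}$ in $y_{t}=CA^{\rm a*}x_{t-1}+CBu_{t-1}^{\rm a*}$ and using $C^{\dag}C=I$ (full column rank of $C$) together with $x_{0}=C^{\dag}r_{0}$, which follows from the (\ref{AMOPUL1}) constraint $r_{0}=Cx_{0}$, to rewrite $x_{t-1}=C^{\dag}y_{t-1}$ and then replace $CA^{\rm a*}C^{\dag}r_{t-1}+CBu_{t-1}^{\rm a*}$ by $y_{t}^{\rm a}$. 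Taking norms, summing, and using $y_{0}-r_{0}=\boldsymbol{0}$ gives the scalar recursion $S_{n}\leq \|CA^{\rm a*}C^{\dag}\|_{2}\,S_{n-1}+v_{\rm\scriptscriptstyle A1}^{*}$ for $S_{n}\coloneqq\sum_{t=1}^{n}\|y_{t}-r_{t}\|_{2}$, which I would iterate down to $S_{1}=\|y_{1}-r_{1}\|_{2}$. Because $x_{0}=C^{\dag}r_{0}$ implies $y_{1}=CA^{\rm a*}x_{0}+CBu_{0}^{\rm a*}=CA^{\rm a*}C^{\dag}r_{0}+CBu_{0}^{\rm a*}=y_{1}^{\rm a}$, the base case satisfies $S_{1}\leq v_{\rm\scriptscriptstyle A1}^{*}$, yielding
$$\sum_{t=1}^{N}\|y_{t}-r_{t}\|_{2}\leq \Bigl(\sum_{i=0}^{N-1}\|CA^{\rm a*}C^{\dag}\|_{2}^{i}\Bigr) v_{\rm\scriptscriptstyle A1}^{*}.$$

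Finally, I would take the infimum over $\mathcal{F}_{\rm\scriptscriptstyle A1}^{*}$. Because the geometric sum $\sum_{i=0}^{N-1}c^{i}$ is monotonically nondecreasing in $c\ge 0$, the infimum interchanges with the sum:
$$\inf_{\scriptscriptstyle(A^{\rm a*},U^{\rm a*})\in\mathcal{F}_{\rm\scriptscriptstyle A1}^{*}}\sum_{i=0}^{N-1}\|CA^{\rm a*}C^{\dag}\|_{2}^{i}=\sum_{i=0}^{N-1}(\zeta^{*})^{i},$$
which combined with the previous displayed inequality and $v_{\rm\scriptscriptstyle M1}^{*}\leq \sum_{t=1}^{N}\|y_{t}-r_{t}\|_{2}$ yields the claim. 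I do not expect a genuine obstacle: the argument is essentially Theorem \ref{theorem:2} with $v_{\rm\scriptscriptstyle A1}^{*}$ substituting for $\omega^{\rm u}$ and the attainability hypothesis removing the need for a limiting $\omega^{\rm u}$ bound. The only care required is the bookkeeping around the boundary indices, namely checking $y_{0}=r_{0}$ for the recursion to start cleanly and $y_{1}=y_{1}^{\rm a}$ so that $S_{1}\leq v_{\rm\scriptscriptstyle A1}^{*}$ as a single telescoped term rather than a full sum.
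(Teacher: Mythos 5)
Your proposal is correct and follows essentially the same route as the paper: the paper's proof likewise propagates an optimal (AMOPUL1) solution through the true dynamics, invokes the telescoping recursion from the proof of Theorem \ref{theorem:2} with $v_{\rm\scriptscriptstyle A1}^*$ in place of $\omega^{\rm a*}$, uses feasibility of that solution to (MOPUL1), and takes the infimum over $\mathcal{F}_{\rm\scriptscriptstyle A1}^{*}$ at the end. Your explicit checks of the boundary cases $y_0=r_0$ and $y_1=y_1^{\rm a}$, and of the monotonicity justifying the interchange of infimum and geometric sum, are details the paper leaves implicit but are consistent with its argument.
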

\begin{proof}
When (\ref{AMOPUL1}) is attainable, let $(A^{\rm a*}, U^{\rm a*})\in\mathcal{F}_{\rm\scriptscriptstyle A1}^{*}$ be an optimal solution of\\ (\ref{AMOPUL1}), $x_{t}=A^{\rm a*}x_{t-1}+Bu_{t-1}^{\rm a*}$, $y_{t}=Cx_{t}$ in (\ref{MOPUL1}), and $x^{\rm a*}_{t}=A^{\rm a*}C^{\dag}r_{t-1}+Bu_{t-1}^{\rm a*}$, $y^{\rm a*}_{t}=Cx^{\rm a*}_{t}$ in (\ref{AMOPUL1}), $t=1, 2, \ldots, N$. With similar arguments as in the proof of Theorem \ref{theorem:2}, we can obtain the following $N-1$ inequalities:
\begin{align*}
\sum_{t=1}^{n}\Vert y_{t}-r_{t}\Vert_{2}\leq\Vert CA^{\rm a*}C^{\dag}\Vert_{2}\sum_{t=1}^{n-1}\Vert y_{t}-r_{t}\Vert_{2}+\sum_{t=1}^{N}\left\Vert y_{t}^{\rm a*}-r_{t}\right\Vert_{2},~~n=2, 3, \ldots, N,
\end{align*}
which imply that
\begin{align*}
\sum_{t=1}^{N}\Vert y_{t}-r_{t}\Vert_{2}&\leq\left(\sum_{i=0}^{N-1}\Vert CA^{\rm a*}C^{\dag}\Vert_{2}^{i}\right)\sum_{t=1}^{N}\left\Vert y_{t}^{\rm a*}-r_{t}\right\Vert_{2}\\
&=\left(\sum_{i=0}^{N-1}\Vert CA^{\rm a*}C^{\dag}\Vert_{2}^{i}\right)v_{\rm\scriptscriptstyle A1}^*.
\end{align*}
Since any optimal solution of (\ref{AMOPUL1}) is feasible to (\ref{MOPUL1}), we know
\begin{align*}
v_{\rm\scriptscriptstyle M1}^*\leq \left(\sum_{i=0}^{N-1}\Vert CA^{\rm a*}C^{\dag}\Vert_{2}^{i}\right)v_{\rm\scriptscriptstyle A1}^*
\end{align*}
holds for each $(A^{\rm a*}, U^{\rm a*})\in\mathcal{F}_{\rm\scriptscriptstyle A1}^{*}$. Hence
\begin{align*}
v_{\rm\scriptscriptstyle M1}^*&\leq \inf_{\scriptscriptstyle(A^{\rm a*},U^{\rm a*})\in \mathcal{F}_{\rm\scriptscriptstyle A1}^{*}}\left(\sum_{i=0}^{N-1}\Vert CA^{\rm a*}C^{\dag}\Vert_{2}^{i}\right)v_{\rm\scriptscriptstyle A1}^*\\
&=\left(\sum_{i=0}^{N-1}(\zeta^*)^{i}\right)v_{\rm\scriptscriptstyle A1}^*.
\end{align*}
\end{proof}
\begin{rem}\label{remark:5}
When (\ref{MOPUL1}) and (\ref{AMOPUL1}) are both attainable with $v_{\rm \scriptscriptstyle M1}^*>0$, Theorems \ref{theorem:5} and \ref{theorem:6} imply that
\begin{align*}
\frac{1}{\sum_{i=0}^{N-1}(\zeta^*)^i}\leq \frac{v_{\rm\scriptscriptstyle A1}^*}{v_{\rm \scriptscriptstyle M1}^*}\leq 1+\gamma^*.
\end{align*}
Furthermore, if $\zeta^*=\gamma^*=0$, then $v_{\rm\scriptscriptstyle M1}^*=v_{\rm \scriptscriptstyle A1}^*$.
\end{rem}
\subsubsection{MOPUL2}\label{subsubsection:3.3.2}
In some scenarios such as (\ref{O-IN/OUTPUT2}) in Subsection \ref{subsection:2.4} and a COVID-19 pandemic optimal control model MOCM in \cite{Xu2023}, a small change of the transition matrix $A$ is preferred within a guaranteed level of cumulative error. We may consider the following problem:
\begin{align}
\begin{split}\label{MOPUL2}
\min_{A, U}&~~\left\Vert A-A^{\rm r}\right\Vert_{F}\nonumber\\
{\rm s.t.}
&~~x_{t}=Ax_{t-1}+Bu_{t-1},~~t=1, 2, \ldots, N,\\
&~~y_{t}=Cx_{t},~~t=0, 1, \ldots, N,\\
&~~\sum_{t=1}^{N}\left\Vert y_{t}-r_{t}\right\Vert_{2}\leq \omega,\\
&~~\left\Vert u_{t}-u_{t}^{\rm r}\right\Vert_{2}\leq \omega_{t},~~t=0, 1, \ldots, N-1,\\
&~~\left(A, U\right)\in\mathcal{S}_{\rm\scriptscriptstyle M2},
\end{split}\tag{MOPUL2}
\end{align}
where the transition matrix $A\in\mathbb{R}^{n\times n}$ and control $U\coloneqq(u_{0}, u_{1}, \ldots, u_{N-1})\in\mathbb{R}^{m\times N}$ are decision variables, $\{r_t\}_{t=1}^{N}\subseteq\mathbb{R}^{p}$, $A^{\rm r}\in\mathbb{R}^{n\times n}$, and $\{u_{t}^{\rm r}\}_{t=0}^{N-1}\subseteq\mathbb{R}^{m\times 1}$ are given references of $\{y_t\}_{t=1}^{N}$, $A$, and $\{u_{t}\}_{t=0}^{N-1}$, respectively, $\omega, \{\omega_{t}\}_{t=0}^{N-1}\subseteq\mathbb{R}_{+}$ are control levels, and constraint set $\mathcal{S}_{\rm\scriptscriptstyle M2}\subseteq\mathbb{R}^{n\times n}\times\mathbb{R}^{m\times N}$ is SD representable. Notice that this is a special case of (\ref{MOPUL}) by setting $\lambda_{1}=1$, $\lambda_{2}=\lambda_{3}=0$, $f_{1}(A)=\Vert A-A^{\rm r}\Vert_{F}$, and $\mathcal{S}=(\mathbb{R}^{n\times n}\times\{U|\Vert u_{t}-u_{t}^{\rm r}\Vert_{2}\leq\omega_{t}, t=0, 1, \ldots, N-1\}\cap\mathcal{S}_{\rm\scriptscriptstyle M2})\times\{{\rm constant}~\omega\}$.
The objective function enforces a steady and controllable change of transition matrix $A$ with respect to the reference value. The third constraint guarantees a cumulative precision of the iteration within the control level $\omega$. And the fourth constraint guarantees a controllable change of the system inputs. Correspondingly, we can derive the following SDP approximation model:
\begin{align}
\begin{split}\label{AMOPUL2}
\min_{A, U}&~~\left\Vert A-A^{\rm r}\right\Vert_{F}\\
{\rm s.t.}
&~~r_{0}=Cx_{0},~~x_{0}^{\rm a}=x_{0},\\
&~~x_{t}^{\rm a}=AC^{\dag}r_{t-1}+Bu_{t-1},~~t=1, 2, \ldots, N,\\
&~~y_{t}^{\rm a}=Cx_{t}^{\rm a},~~t=0, 1, \ldots, N,\\
&~~\sum_{t=1}^{N}\left\Vert y_{t}^{\rm a}-r_{t}\right\Vert_{2}\leq\tilde{\omega},\\
&~~\left\Vert u_{t}-u_{t}^{\rm r}\right\Vert_{2}\leq \omega_{t},~~t=0, 1, \ldots, N-1,\\
&~~\left(A, U\right)\in\mathcal{S}_{\rm\scriptscriptstyle M2},
\end{split}\tag{AMOPUL2}
\end{align}
where $A$ and $U\coloneqq(u_{0}, u_{1}, \ldots, u_{N-1})$ are decision variables, $\tilde{\omega}\in\mathbb{R}_+$ is control level. Following Theorem \ref{theorem:3}, we have the relationship between (\ref{MOPUL2}) and (\ref{AMOPUL2}) in the next result.
\begin{thm}\label{theorem:7}
Take $\tilde{\omega}=\omega/(\sum_{i=0}^{N-1}\beta^{i})$ in (\ref{AMOPUL2}) with respect to $\omega$ in (\ref{MOPUL2}) and $\beta>0$.
If $\Vert CA^{\rm a}C^{\dag}\Vert_{2}\leq\beta$ for any feasible $A^{\rm a}$ of (\ref{AMOPUL2}), then any feasible solution of (\ref{AMOPUL2}) is feasible to (\ref{MOPUL2}), and the optimal objective value of (\ref{AMOPUL2}) becomes an upper bound for that of (\ref{MOPUL2}).
\end{thm}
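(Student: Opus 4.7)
The plan is to recognize (MOPUL2) and (AMOPUL2) as a concrete instance of the setting covered by Theorem \ref{theorem:3}, and then simply invoke that theorem. Observe that (MOPUL2) is exactly (MOPUL) with $\lambda_1=1$, $\lambda_2=\lambda_3=0$, $f_1(A)=\Vert A-A^{\rm r}\Vert_F$, and with $\mathcal{S}$ chosen to force $\omega$ to equal the given constant, i.e.\ $\mathcal{S}\subseteq\{(A,U,\omega)\mid \omega=\omega^{\rm c}\}$ with $\omega^{\rm c}=\omega$. In particular, the hypothesis of Theorem \ref{theorem:3} is met by the very problem (MOPUL2).

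Next I would match the cumulative-error thresholds. Theorem \ref{theorem:3} asks to replace the constraint $\sum_{t=1}^N\Vert y_t^{\rm a}-r_t\Vert_2\le\omega^{\rm c}$ in the SDP approximation by $\sum_{t=1}^N\Vert y_t^{\rm a}-r_t\Vert_2\le\omega^{\rm c}/\sum_{i=0}^{N-1}\beta^i$; this is precisely the definition $\tilde\omega=\omega/\sum_{i=0}^{N-1}\beta^i$ used in (AMOPUL2). The remaining constraints of (AMOPUL2), namely $(A,U)\in\mathcal{S}_{\rm\scriptscriptstyle M2}$ and the pointwise control-variation bounds $\Vert u_t-u_t^{\rm r}\Vert_2\le\omega_t$, coincide with those of (MOPUL2), so no further work is needed there.

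With these identifications, Theorem \ref{theorem:3} applied to $(A^{\rm a},U^{\rm a})$ feasible to (AMOPUL2) yields
\begin{align*}
\sum_{t=1}^{N}\Vert y_t-r_t\Vert_2\le\omega^{\rm c}=\omega,
\end{align*}
where $x_t=A^{\rm a}x_{t-1}+Bu_{t-1}^{\rm a}$ and $y_t=Cx_t$. Combined with the fact that the $\mathcal{S}_{\rm\scriptscriptstyle M2}$ constraint and the $\Vert u_t-u_t^{\rm r}\Vert_2\le\omega_t$ bounds are shared between the two problems, this shows that $(A^{\rm a},U^{\rm a})$ is feasible to (MOPUL2). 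Since both problems have the identical objective $\Vert A-A^{\rm r}\Vert_F$, any feasible point of (AMOPUL2) evaluated in (MOPUL2) gives the same objective value, so the optimal value of (AMOPUL2) bounds the optimal value of (MOPUL2) from above.

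I do not foresee a genuine obstacle: the whole content is the observation that (MOPUL2)/(AMOPUL2) fit the template of Theorem \ref{theorem:3} once one identifies $\omega^{\rm c}$ with the fixed $\omega$ and notes that the scaled threshold $\tilde\omega$ is exactly what Theorem \ref{theorem:3} prescribes. The only mild care needed is to confirm that the constraints which are not part of the (MOPUL)/(AMOPUL) template (the $\Vert u_t-u_t^{\rm r}\Vert_2\le\omega_t$ bounds) can be absorbed into $\mathcal{S}$ consistently in both problems, which is immediate from the way $\mathcal{S}$ is constructed right below (AMOPUL2).
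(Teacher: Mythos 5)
Your proposal is correct and matches the paper exactly: the paper gives no separate proof of Theorem \ref{theorem:7}, stating only that it follows from Theorem \ref{theorem:3}, which is precisely the reduction you carry out (identifying $\omega^{\rm c}$ with the fixed $\omega$, matching $\tilde{\omega}=\omega/\sum_{i=0}^{N-1}\beta^{i}$ to the scaled threshold, and absorbing the $\Vert u_t-u_t^{\rm r}\Vert_2\le\omega_t$ bounds into $\mathcal{S}$). Your version is in fact slightly more explicit than the paper's about why the remaining constraints and the common objective $\Vert A-A^{\rm r}\Vert_F$ make the feasibility transfer immediate.
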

\begin{rem}\label{remark:6}
The existence of an upper bound $\beta$ of $\Vert CA^{\rm a}C^{\dag}\Vert_{2}$ in Theorem \ref{theorem:7} is guaranteed as mentioned in Remark \ref{remark:2}.
\end{rem}

\section{Numerical experiments}\label{section:4}
In this section, we study the influences of perturbed noises at reference outputs $\{r_t\}_{t=1}^N$ and control level $\omega$ on the performance of the proposed approximation model (\ref{AMOPUL}) numerically.
Theorem \ref{theorem:4} shows that the noise levels of the given reference outputs $\{r_t\}_{t=1}^N$ are keys to the optimal objective value.
We study the numerical performance of (\ref{AMOPUL1}) and (\ref{AMOPUL2}) with different noise levels of $\{r_t\}_{t=1}^N$ in Subsection \ref{subsection:4.1} and Subsubsection \ref{subsubsection:4.2.1}, respectively.
In addition, Theorems \ref{theorem:3} and \ref{theorem:7} indicate that the size of feasible set of (\ref{AMOPUL}) is mainly determined by the control level $\tilde{\omega}$. Related numerical results on the performance of (\ref{AMOPUL2}) in terms of $\tilde{\omega}$ are reported in Subsubsection \ref{subsubsection:4.2.2}.

All data are randomly generated in our experiments as following:
\begin{itemize}[leftmargin=*]
  \item {\bf An ideal instance.} Take $n=m=p=100$, $N=30$, and $B=C=I$ in (\ref{MOPUL}).
Initial $r_{0}$ is uniformly generated in $(-0.5, 0.5)^p$. $\hat{A}$ is an ideal value of $A$ with each component being generated from the normal distribution $\mathcal{N}(0, 0.1^2)$. $\hat{u}_{t}\coloneqq\boldsymbol{1}\hat{u}_t^{\rm s}$ is an ideal value of $u_t$ with $\hat{u}_t^{\rm s}$ being uniformly generated in $(-0.5, 0.5)$, $t=0, 1, \ldots, N-1$.
Define $\hat{x}_{t}=\hat{A}\hat{x}_{t-1}+\hat{u}_{t-1}, t=1, 2, \dots, N$ and $\hat{x}_0=r_0$. Then $(\hat{A}, \{\hat{u}_t\}_{t=0}^{N-1}, \{\hat{x}_t\}_{t=0}^N)$ forms an ideal instance.
  \item {\bf Reference outputs.} Reference output $r_{t}\coloneqq\hat{x}_{t}+e_{t}$ with the ideal value $\hat{x}_{t}$ and a perturbed noise $e_{t}$ with each component being generated from $\mathcal{N}(\mu, \sigma^{2})$, $t=1, 2, \ldots, N$. A total of 20 random instances are generated for each $(\mu, \sigma)$.
\end{itemize}
The following 4 evaluation criteria are used to measure the performance of (\ref{AMOPUL1}) and (\ref{AMOPUL2}):
\begin{itemize}[leftmargin=*]
  \item[(1)] \textbf{Cumulative error (CE)}: $\sum_{t=1}^{N}\Vert x_t^*-r_{t}\Vert_{2}$ measures the cumulative precision of (\ref{AMOPUL1}), where $\{r_t\}_{t=0}^N$ are the generated reference outputs, $x_t^*=A^{\rm a*}x_{t-1}^*+u_{t-1}^{\rm a*}, t=1, 2, \ldots, N$, and $x_0^*=r_0$ for an optimal solution $(A^{\rm a*}, (u_0^{\rm a*}, u_1^{\rm a*}, \ldots, u_{N-1}^{\rm a*}))$ of (\ref{AMOPUL1}).
  \item[(2)] \textbf{Approximate cumulative error (ACE)}: $\sum_{t=1}^{N}\Vert x_t^{\rm a*}-r_{t}\Vert_{2}$ measures the approximate cumulative precision of (\ref{AMOPUL2}), where $\{r_t\}_{t=0}^N$ are the generated reference outputs, $x_t^{\rm a*}=A^{\rm a*}r_{t-1}+u_{t-1}^{\rm a*}$, $t=1, 2, \ldots, N$ for an optimal solution $(A^{\rm a*}, (u_0^{\rm a*}, u_1^{\rm a*}, \ldots, u_{N-1}^{\rm a*}))$ of (\ref{AMOPUL2}).
  \item[(3)] \textbf{Relative error of $A^{\rm a*}$ (RE$\boldsymbol{A^{\rm a*}}$)}: $\frac{\Vert A^{\rm a*}-\hat{A}\Vert_{F}}{\Vert \hat{A}\Vert_{F}}$ measures the difference between the true $\hat{A}$ and an approximate solution $A^{\rm a*}$.
  \item[(4)] \textbf{Relative error of $U^{\rm a*}$ (RE$\boldsymbol{U^{\rm a*}}$)}: $\frac{\Vert U^{\rm a*}-\hat{U}\Vert_{F}}{\Vert \hat{U}\Vert_{F}}$ measures the difference between the true $\hat{U}$ and an approximate solution $U^{\rm a*}$, where $\hat{U}\coloneqq(\hat{u}_{0}, \hat{u}_{1}, \ldots, \hat{u}_{N-1})$ and $U^{a*}\coloneqq(u_{0}^{a*}, u_{1}^{a*}, \ldots, u_{N-1}^{a*})$.
\end{itemize}
Numerical results using real data for the COVID-19 pandemic optimal control can be referred to \cite{Xu2023}.

All experiments are implemented using MATLAB R2019b on a laptop equipped with 4.00 GB memory and AMD Ryzen 3 2200U with Radeon Vega Mobile Gfx (2.50 GHz). We use MOSEK (version 9.1.9) (\href{https://www.mosek.com}{https://www.mosek.com}) in CVX-w64 (version 2.2) (\href{http://cvxr.com/cvx/}{http://cvxr.com/cvx/}) to solve all involved optimization problems. Five significant digits are taken for the numerical results shown in every table.

\subsection{Performance of AMOPUL1}\label{subsection:4.1}
We study the influences of noise levels at the reference outputs on the performance of (\ref{AMOPUL1}).
For simplicity, the constraint set $\mathcal{S}_{\rm\scriptscriptstyle M1}$ is set to be box constrained:
\begin{align*}
&\{A|-0.4\leq a_{ij}\leq0.4, i, j=1, 2, \ldots, n\}\times\{U|-0.5\leq u_{t}^{i}\leq0.5, i=1, 2, \ldots, n, t=0, 1, \ldots, N-1\},
\end{align*}
which is bounded and hence (\ref{AMOPUL1}) is attainable. Then (\ref{AMOPUL1}) can be reformulated as
\begin{align*}
\min_{A, U}&~~\sum_{t=1}^{N}\left\Vert Ar_{t-1}+u_{t-1}-r_{t}\right\Vert_{2}\\
{\rm s.t.}
&~~r_{0}=x_{0},\nonumber\\
&~~-0.4\leq a_{ij}\leq0.4,~~i, j=1, 2, \ldots, n,\\
&~~-0.5\leq u_{t}^{i}\leq0.5,~~i=1, 2, \ldots, n, t=0, 1, \ldots, N-1.\nonumber
\end{align*}
A total of 20 instances of $\{r_t\}_{t=1}^{N}$ with perturbed noises for each $(\mu,\sigma)$ are generated with respect to $\mu=0, \sigma=$ 0.05, 0.1, 0.2, 0.3, 0.4, 0.5, 0.6, 0.7, 0.8; and $\mu=1, \sigma=$ 2.5, 3, respectively. The means and standard deviations of the cumulative errors (CE) and relative errors of $A^{\rm a*}$ and $U^{\rm a*}$ (RE$A^{\rm a*}$ and RE$U^{\rm a*}$) are shown in Table \ref{table:1}, where $(A^{\rm a*}, U^{\rm a*})$ is the optimal solution of (\ref{AMOPUL1}).
\begin{table}[H]
\centering
\caption{Means and standard deviations of CE, RE$A^{\rm a*}$ and RE$U^{\rm a*}$ for (\ref{AMOPUL1}).}
\label{table:1}
\begin{tabular}{ccccccccc}
\hline
\multirow{3}*{($\mu$, $\sigma$)}  & \multicolumn{2}{c}{CE} &  \multicolumn{2}{c}{RE$A^{\rm a*}$} &  \multicolumn{2}{c}{ RE$U^{\rm a*}$} \\
\cmidrule(l){2-3} \cmidrule(l){4-5} \cmidrule(l){6-7}
& mean & std & mean & std & mean & std\\
\hline
(0, 0.05)    & 1.9885e-08 &  4.9712e-08 & 1.0070e+00 & 4.8692e-03 & 7.7927e-01 & 9.3722e-03\\
(0, 0.1)& 1.3192e-08 & 2.7314e-08& 1.0190e+00 & 7.4878e-03 & 8.1443e-01 & 1.5714e-02   \\
(0, 0.2) & 1.8346e-08& 2.5523e-08 & 1.0324e+00 & 8.0172e-03 & 8.8606e-01 & 1.2572e-02  \\
(0, 0.3)  & 4.1105e-08 & 1.2360e-07 & 1.0485e+00 & 9.6251e-03 & 9.2916e-01 & 1.0614e-02   \\
(0, 0.4)  & 7.6893e-09 & 2.0765e-08 & 1.0725e+00 & 1.1846e-02 & 9.5445e-01 & 1.1122e-02   \\
(0, 0.5)  & 1.2974e-08 & 3.5385e-08 &  1.0949e+00 & 9.8607e-03 & 9.6895e-01 & 7.6003e-03   \\
(0, 0.6)   & 3.6863e-08 & 5.0465e-08 & 1.1194e+00 & 1.7583e-02 & 9.7755e-01 & 7.0614e-03   \\
(0, 0.7)  & 7.2976e-09 & 1.7096e-08 & 1.1439e+00 & 1.0911e-02 & 9.8106e-01 & 3.4137e-03 \\
(0, 0.8)  & 1.5437e-08 & 2.9897e-08 & 1.1655e+00 & 2.1094e-02 & 9.8639e-01 & 3.6822e-03  \\
\textbf{(1, 2.5)}  & \textbf{1.6904e+01} & \textbf{4.3973e+01} & \textbf{1.6462e+00} & \textbf{6.3211e-02} & \textbf{1.0176e+00} & \textbf{1.2296e-02}  \\
\textbf{(1, 3.0)} & \textbf{8.6961e+01} & \textbf{9.3299e+01} & \textbf{1.8027e+00} & \textbf{7.9844e-02} & \textbf{1.0479e+00} & \textbf{2.4774e-02} \\
\hline
\end{tabular}
\end{table}
Figures \ref{figure:1} and \ref{figure:2} plot the trends of CE, RE$A^{\rm a*}$ and RE$U^{\rm a*}$ shown in Table \ref{table:1} with respect to the perturbed $(\mu, \sigma)$ pairs, respectively, in which the length of each error bar above and below the mean value reflects the corresponding standard deviation.
\begin{figure}[H]
\begin{subfigure}[\rm Line plot with error bars of CE.]{
\centering
\includegraphics[width=0.45\textwidth]{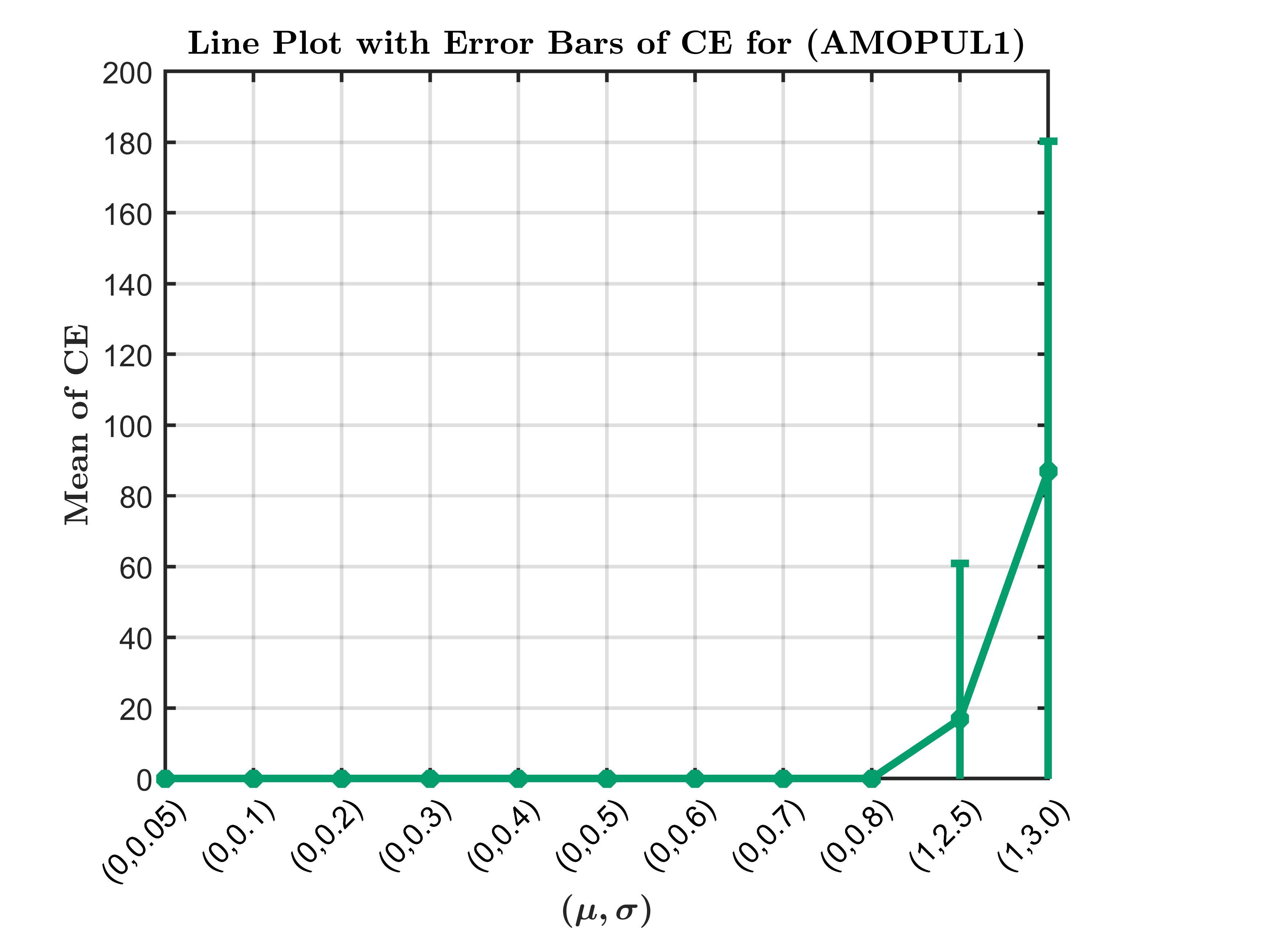}
\label{figure:1}}
\end{subfigure}
\begin{subfigure}[\rm Line plot with error bars of RE$A^{\rm a*}$ and RE$U^{\rm a*}$.]{
\centering
\includegraphics[width=0.45\textwidth]{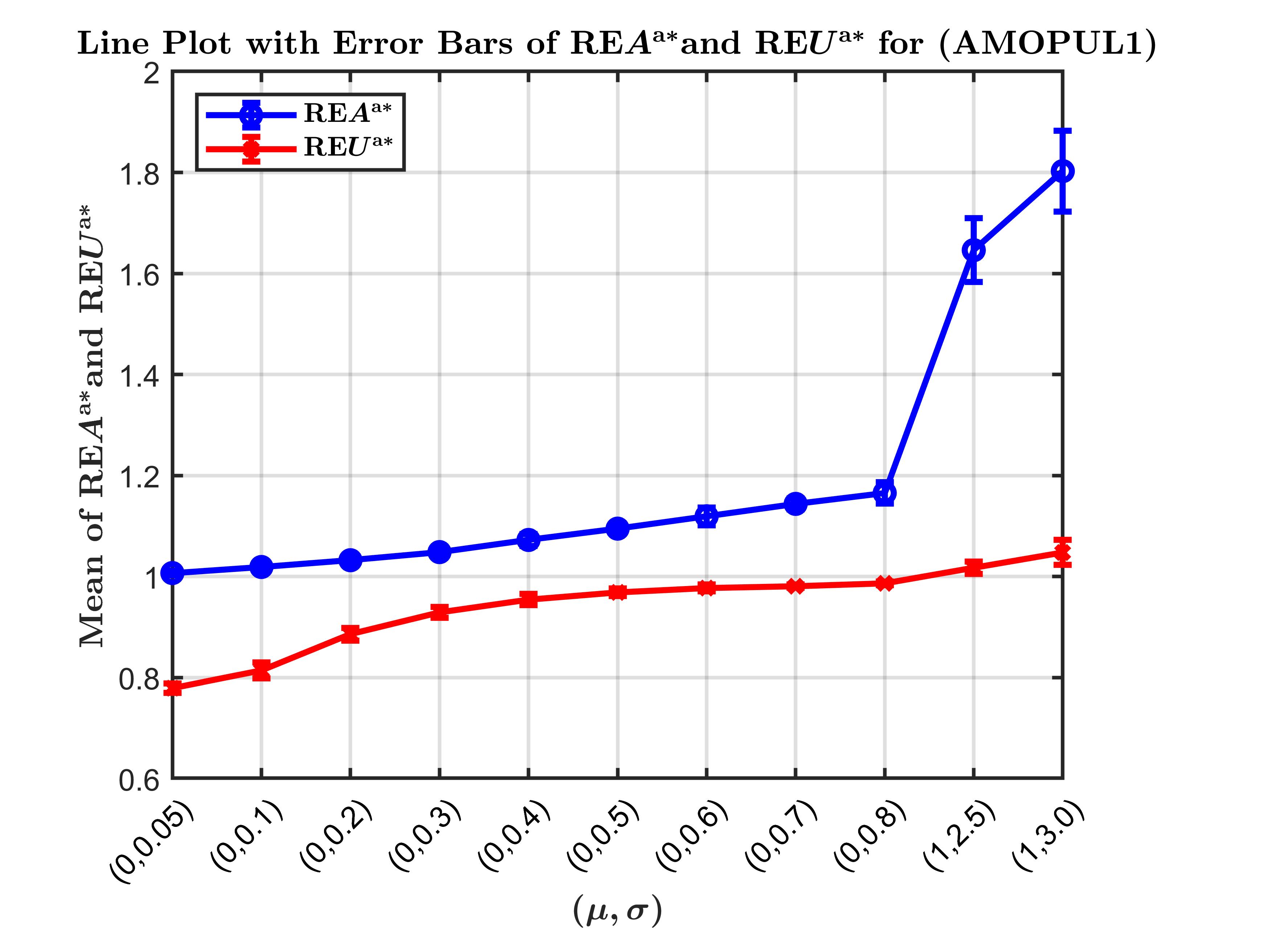}
\label{figure:2}}
\end{subfigure}
\caption{Line plots with error bars of CE, RE$A^{\rm a*}$ and RE$U^{\rm a*}$ for (\ref{AMOPUL1}).}
\end{figure}
Observe that when the perturbed noise of $\{r_t\}_{t=1}^{N}$ is small (say, $\mu=0, \sigma=0.05$ - $0.8$), the means and standard deviations of CE are all less than $10^{-6}$ as shown in Table \ref{table:1} and Figure \ref{figure:1}. This shows that (\ref{AMOPUL1}) achieves accurate and robust solutions.
When the perturbed noise becomes large (say, $\mu=1, \sigma=2.5$, $3$), the reference outputs $\{r_t\}_{t=1}^N$ become chaotic. In this case, the approximate output values fail to fit the given reference outputs, and CE becomes
large and oscillating.

On the other hand, RE$A^{\rm a*}$ and RE$U^{\rm a*}$ are more sensitive to the perturbed noises.
The means of RE$A^{\rm a*}$ and RE$U^{\rm a*}$ in Figure \ref{figure:2}
increase as the perturbed noise becomes larger.
In particular, when the noise is large enough (say, $\mu=1, \sigma=2.5$, $3$), there is a significant increase in the mean and standard deviation of RE$A^{\rm a*}$.

In summary, (\ref{AMOPUL1}) handles the system output quite well in fitting given reference outputs with small perturbed noises.

\subsection{Performance of AMOPUL2}\label{subsection:4.2}
We now study the performance of (\ref{AMOPUL2}) with perturbed noises at reference outputs $\{r_t\}_{t=1}^N$ and different control levels $\tilde{\omega}$ and $\{\omega_t\}_{t=0}^{N-1}$.
The constraint set $\mathcal{S}_{\rm\scriptscriptstyle M2}$ is set to be the trivial constraint $\mathbb{R}^{n\times n}\times\mathbb{R}^{n\times N}$ for simplicity. Then (\ref{AMOPUL2}) can be reformulated as
\begin{align}
\begin{split}\label{amopul2_special}
\min_{A, U}&~~\Vert A-\hat{A}\Vert_{F}\\
{\rm s.t.}
&~~r_{0}=x_{0},\\
&~~\sum_{t=1}^{N}\left\Vert Ar_{t-1}+u_{t-1}-r_{t}\right\Vert_{2}\leq \tilde{\omega},\\
&~~\left\Vert u_{t}-\hat{u}_{t}\right\Vert_{2}\leq \omega_{t},~~t=0, 1, \ldots, N-1,
\end{split}
\end{align}
where $\hat{A}$ and $\hat{U}=(\hat{u}_0,\hat{u}_1,\dots,\hat{u}_{N-1}))$ are taken to be the ideal data, $\tilde{\omega}$ and $ \{\omega_t\}_{t=0}^{N-1}$ are the control levels.

\subsubsection{Influence of noises}\label{subsubsection:4.2.1}
Fixed $\tilde{\omega}=10$ and $\omega_{t}=3, t=0,1,\dots,N-1$, we study the influence of the perturbed noise at the reference outputs on the performance of (\ref{AMOPUL2}). A total of 20 instances of $\{r_t\}_{t=1}^{N}$ with perturbed noises for each $(\mu,\sigma)$ with respect to $\mu=0$, and $\sigma= $ 0.05, 0.1, 0.2, 0.3, 0.4, 0.5, 0.6, 0.7, 0.8 are employed.
The means and standard deviations of the relative error of $A^{\rm a*}$ (RE$A^{\rm a*}$) and approximate cumulative error (ACE) are shown in Table \ref{table:2}, where $(A^{\rm a*}, (u_0^{\rm a*}, u_1^{\rm a*}, \ldots, u_{N-1}^{\rm a*}))$ is an optimal solution of (\ref{AMOPUL2}).
\begin{table}[H]
\centering
\caption{Means and standard deviations of RE$A^{\rm a*}$ and ACE for (\ref{AMOPUL2}) with $\tilde{\omega}=10$ and $\omega_t=3, t=0, 1, \ldots, N-1$.}
\label{table:2}
\begin{tabular}{ccccc}
\hline
\multirow{3}*{($\mu$, $\sigma$)} & \multicolumn{2}{c}{RE$A^{\rm a*}$} & \multicolumn{2}{c}{ACE}\\
 \cmidrule(l){2-3}  \cmidrule(l){4-5}
& mean & std & mean & std\\
\hline
\textbf{(0, 0.05)}& \textbf{5.5770e-11} & \textbf{6.3925e-11} & 3.0853e-01 & 1.3365e-02\\
\textbf{(0, 0.1)} & \textbf{3.4610e-12} & \textbf{1.1336e-12} & 4.7206e-01 & 1.7822e-02\\
\textbf{(0, 0.2)} & \textbf{6.7624e-12} & \textbf{1.2553e-11} & 3.9937e+00 & 3.8810e-01\\
(0, 0.3) & 5.6781e-02 & 6.4317e-03 & \textbf{1.0000e+01} & \textbf{5.2062e-08}\\
(0, 0.4) & 1.6691e-01 & 1.0648e-02 & \textbf{1.0000e+01} & \textbf{5.4277e-08}\\
(0, 0.5) & 2.5858e-01 & 9.7655e-03 & \textbf{1.0000e+01} & \textbf{1.1359e-07}\\
(0, 0.6) & 3.3659e-01 & 1.3048e-02 & \textbf{1.0000e+01} & \textbf{1.4509e-07}\\
(0, 0.7) & 3.9691e-01 & 9.0062e-03 & \textbf{1.0000e+01} & \textbf{1.3490e-07}\\
(0, 0.8) &4.4853e-01 & 1.5996e-02 & \textbf{1.0000e+01} & \textbf{1.8154e-07}\\
\hline
\end{tabular}
\end{table}

Table \ref{table:2} shows that for the reference outputs with small noises (say, $\mu=0, \sigma=0.05$ - $0.2$), the means and standard deviations of RE$A^{\rm a*}$ are all less than $10^{-10}$, while the means of ACE are less than 10 ($=\tilde{\omega}$) with small standard deviations, i.e., the strict inequality ACE~$<\tilde{\omega}$ is satisfied in the second constraint of (\ref{amopul2_special}) , which means that this constraint is inactive.
When the noise becomes larger (say, $\mu=0$, $\sigma=0.3$-$0.8$), the mean of RE$A^{\rm a*}$ increases drastically, while the means of ACE become 10 ($=\tilde{\omega}$) with standard deviations being less than $10^{-6}$, i.e., the equality ACE~$=\tilde{\omega}$ is almost binding in the second constraint of (\ref{amopul2_special}), which means the constraint becomes active.
Therefore, the error of recovering the true $\hat{A}$ mainly comes from the perturbed noises at reference outputs in (\ref{AMOPUL2}) with a fixed control level of $\tilde{\omega}$ and $ \{\omega_{t}\}_{t=0}^{N-1}$. The smaller the perturbed noises are, the higher accuracy is for recovering the true $\hat{A}$.

\subsubsection{Influence of control levels}\label{subsubsection:4.2.2}
Fixed the noise level of $\{r_t\}_{t=1}^N$ with respect to $\mu=0$ and $\sigma=0.5$, we study the influence of control levels $\tilde{\omega}$ and $\{\omega_t\}_{t=0}^{N-1}$ on the performance of (\ref{AMOPUL2}). Set $\tilde{\omega}=$ 2, 10, 20, 30, 40, 50, 60, 70, 80, 90, 100, 110, 120, 130, 140, 150, 160, and $\omega_t=$ 3, 4.5, 6, 8, $t=0, 1,\dots,N-1$, respectively.
The means and standard deviations of RE$A^{\rm a*}$ and ACE are shown in Table \ref{table:3}, where $(A^{\rm a*}, (u_0^{\rm a*}, u_1^{\rm a*}, \ldots, u_{N-1}^{\rm a*}))$ is an optimal solution of (AMOPUL2).
\begin{table}[H]
\centering
\scriptsize
\caption{Means and standard deviations of RE$A^{\rm a*}$ and ACE for (\ref{AMOPUL2}) with the noise level $(\mu=0, \sigma=0.5)$ of $\{r_t\}_{t=1}^N$.}
\label{table:3}
\begin{tabular}{cccccccccc}
\hline
\multirow{3}*{$\left(\{\omega_t\}, \omega\right)$} & \multicolumn{2}{c}{RE$A^{\rm a*}$} & \multicolumn{2}{c}{ACE} &\multirow{3}*{$\left(\{\omega_t\}, \omega\right)$} & \multicolumn{2}{c}{RE$A^{\rm a*}$} & \multicolumn{2}{c}{ACE}\\
 \cmidrule(l){2-3}  \cmidrule(l){4-5} \cmidrule(l){7-8}  \cmidrule(l){9-10}
& mean & std & mean & std & & mean & std & mean & std\\
\hline
(3, 2)& 2.9161e-01 & 9.8335e-03 & 2.0000e+00 & 5.6203e-08 & (6, 2) & 5.6497e-02 & 8.0709e-03 & 2.0000e+00 & 4.1237e-08\\
(3, 10) & 2.5858e-01 & 9.7655e-03 & 1.0000e+01 & 1.1359e-07& (6, 10) & 3.5717e-02 & 7.4733e-03 & 1.0000e+01 & 7.6000e-08\\
(3, 20) & 2.2544e-01 & 9.5567e-03 & 2.0000e+01 &1.2352e-07 & (6, 20) & 1.5907e-02 & 6.7876e-03 & 2.0000e+01 & 6.3577e-08\\
(3, 30) & 1.9594e-01 & 9.4152e-03 & 3.0000e+01 & 1.0370e-07& (6, 30) & 1.8893e-03 & 3.3554e-03 & 2.8972e+01 & 1.3560e+00\\
(3, 40) & 1.6855e-01 & 9.2127e-03 & 4.0000e+01 & 1.2075e-07& (6, 40) & 2.1835e-11 & 9.2773e-11 & 3.3480e+01 & 2.4152e+00\\
(3, 50) & 1.4263e-01 & 9.0004e-03 & 5.0000e+01 & 1.4636e-07& (6, 50) & 2.4596e-13 & 9.5882e-13 & 3.6904e+01 & 2.2087e+00\\
(3, 60) & 1.1790e-01 & 8.7597e-03 & 6.0000e+01 & 1.4685e-07& (6, 60) & 2.1943e-11 & 4.4039e-11 & 4.1768e+01 & 2.1271e+00\\
(3, 70) & 9.4359e-02 & 8.4786e-03 & 7.0000e+01 & 1.6536e-07& (6, 70) & 2.5626e-12 & 3.0437e-12 & 4.5740e+01 & 2.2466e+00\\
(3, 80) & 7.2036e-02 & 8.1467e-03 & 8.0000e+01 & 1.1959e-07& (6, 80) & 2.6641e-12& 3.4618e-12 & 5.1083e+01 & 2.2815e+00\\
(3, 90) & 5.1010e-02 & 7.7407e-03 & 9.0000e+01 & 9.4728e-08& (6, 90) & 5.0958e-12 & 3.5629e-12 & 5.7665e+01 & 2.0483e+00\\
(3, 100) & 3.1406e-02 & 7.2410e-03 & 1.0000e+02 & 1.1437e-07& (6, 100) & 7.8724e-12 & 5.3282e-12 & 6.4275e+01 & 1.8747e+00\\
(3, 110) & 1.3400e-02 & 6.6400e-03 & 1.1000e+02 & 1.1805e-07& (6, 110) & 1.1513e-11& 8.8819e-12 & 7.0651e+01 & 1.7933e+00\\
(3, 120) & 1.2051e-03 & 2.4367e-03 & 1.1852e+02 & 1.6358e+00& (6, 120) & 3.0276e-11& 4.5556e-11 & 7.6993e+01 & 1.6993e+00\\
(3, \textbf{130}) & \textbf{1.7433e-14} & \textbf{3.3912e-14} & \textbf{1.2337e+02} & \textbf{2.1901e+00}& (6, \textbf{130}) & \textbf{3.1542e-11} & \textbf{7.0938e-11} & \textbf{8.2822e+01} & \textbf{1.6659e+00}\\
(3, \textbf{140}) & \textbf{7.7324e-15} & \textbf{4.8385e-15} & \textbf{1.2786e+02} & \textbf{2.1799e+00}& (6, \textbf{140}) & \textbf{1.5461e-11} & \textbf{3.1048e-11} & \textbf{8.7964e+01}& \textbf{1.7284e+00}\\
(3, \textbf{150}) & \textbf{1.2545e-14} & \textbf{5.6427e-15} & \textbf{1.3255e+02} & \textbf{2.1943e+00}& (6, \textbf{150}) & \textbf{5.7891e-12} & \textbf{2.9446e-12} & \textbf{9.2701e+01} & \textbf{1.7949e+00}\\
(3, \textbf{160}) & \textbf{1.2572e-14} & \textbf{6.5987e-15} & \textbf{1.3742e+02} & \textbf{2.1906e+00}& (6, \textbf{160}) & \textbf{8.2313e-13} & \textbf{9.8118e-13} & \textbf{9.6858e+01} & \textbf{1.8979e+00}\\
\hline
(4.5, 2)& 1.5888e-01 & 9.6910e-03 & 2.0000e+00 & 4.2127e-08 & (\textbf{8}, 2) & \textbf{1.5031e-13} & \textbf{2.8242e-13}& \textbf{6.3282e-01} & \textbf{3.6554e-01}\\
(4.5, 10) & 1.3281e-01 & 9.1542e-03 & 1.0000e+01 & 7.4270e-08& (\textbf{8}, 10) & \textbf{5.7621e-13}& \textbf{1.3298e-12}& \textbf{1.7918e+00} & \textbf{5.7976e-01}\\
(4.5, 20) & 1.0664e-01 & 8.7720e-03 & 2.0000e+01 & 6.9067e-08& (\textbf{8}, 20)& \textbf{1.7518e-12} & \textbf{2.7700e-12} & \textbf{5.6229e+00} & \textbf{9.4325e-01}\\
(4.5, 30) & 8.3178e-02 & 8.4139e-03 & 3.0000e+01 & 8.7391e-08& (\textbf{8}, 30) & \textbf{2.5078e-12} & \textbf{4.0871e-12} & \textbf{1.1458e+01}& \textbf{7.7308e-01}\\
(4.5, 40) & 6.1383e-02 & 8.0030e-03 & 4.0000e+01 & 6.1359e-08& (\textbf{8}, 40)& \textbf{1.2160e-12} & \textbf{1.3510e-12}& \textbf{1.8148e+01} & \textbf{1.4540e+00}\\
(4.5, 50) & 4.1028e-02 & 7.5202e-03 & 5.0000e+01 & 1.3419e-07& (\textbf{8}, 50) & \textbf{4.8283e-13} & \textbf{6.0433e-13} & \textbf{2.2691e+01} & \textbf{1.1005e+00}\\
(4.5, 60) & 2.2192e-02 & 6.9524e-03 & 6.0000e+01 & 9.8571e-08& (\textbf{8}, 60) & \textbf{8.7809e-13} & \textbf{3.8889e-12}& \textbf{2.5669e+01} & \textbf{1.0196e+00}\\
(4.5, 70) & 5.8491e-03 & 5.1886e-03 & 6.9716e+01 &6.2190e-01 & (\textbf{8}, 70) & \textbf{9.3531e-12} & \textbf{1.7779e-11}& \textbf{2.7447e+01} & \textbf{1.1129e+00}\\
(4.5, 80) & 3.5200e-06 & 1.5742e-05 & 7.6419e+01 & 2.0065e+00& (\textbf{8}, 80) & \textbf{8.5084e-12} & \textbf{3.9371e-12}& \textbf{2.8603e+01} & \textbf{1.1621e+00}\\
(4.5, 90) & 1.0881e-14 & 1.3872e-14 & 8.1689e+01 & 1.9256e+00& (\textbf{8}, 90) & \textbf{7.3759e-12}& \textbf{8.0317e-12}& \textbf{2.8347e+01} & \textbf{1.2325e+00}\\
(4.5, 100) & 7.6680e-12 & 3.4256e-11 & 8.7126e+01 & 1.8341e+00& (\textbf{8}, 100) & \textbf{3.9721e-12}& \textbf{3.1534e-12} & \textbf{2.7390e+01} & \textbf{1.8718e+00}\\
(4.5, 110) & 7.3171e-13 & 3.2461e-12 & 9.2319e+01 & 1.8332e+00& (\textbf{8}, 110)& \textbf{3.2240e-11}& \textbf{4.2381e-11} & \textbf{2.8903e+01} & \textbf{3.1120e+00}\\
(4.5, 120) & 5.2741e-13 & 2.3420e-12 & 9.7047e+01 & 1.8736e+00& (\textbf{8}, 120) & \textbf{7.3876e-15}& \textbf{9.6040e-15} & \textbf{3.3420e+01} & \textbf{2.7333e+00}\\
(4.5, \textbf{130}) & \textbf{3.7929e-13} & \textbf{1.6870e-12} & \textbf{1.0132e+02} & \textbf{1.9350e+00}& (\textbf{8}, \textbf{130})& \textbf{5.8003e-14}& \textbf{1.0165e-13} & \textbf{4.3559e+01} & \textbf{2.7069e+00}\\
(4.5, \textbf{140}) & \textbf{9.6352e-12} & \textbf{4.0316e-11} & \textbf{1.0527e+02} & \textbf{1.9955e+00}& (\textbf{8}, \textbf{140}) & \textbf{3.6111e-11} & \textbf{7.4397e-11} & \textbf{5.4163e+01} & \textbf{2.5081e+00}\\
(4.5, \textbf{150}) & \textbf{7.8673e-12} & \textbf{3.4276e-11} & \textbf{1.0906e+02} & \textbf{2.0712e+00}& (\textbf{8}, \textbf{150}) & \textbf{2.5740e-14} & \textbf{5.9137e-14} & \textbf{6.2407e+01} & \textbf{1.9772e+00}\\
(4.5, \textbf{160}) & \textbf{5.8232e-11} & \textbf{1.1314e-10} & \textbf{1.1278e+02} & \textbf{2.1251e+00}& (\textbf{8}, \textbf{160}) & \textbf{1.5156e-11} & \textbf{3.5062e-11} & \textbf{6.9692e+01} & \textbf{1.7940e+00}\\
\hline
\end{tabular}
\end{table}
Figures \ref{figure:3} and \ref{figure:4} plot the trends of RE$A^{\rm a*}$ and ACE shown in Table \ref{table:3} with respect to different values of $\tilde{\omega}$ and $\{\omega_t\}_{t=0}^{N-1}$, respectively, in which the length of each error bar above and below the mean value reflects the corresponding standard deviation.
\begin{figure}[H]
\begin{subfigure}[\rm Line plot with error bars of RE$A^{\rm a*}$.]{
\centering
\includegraphics[width=0.45\textwidth]{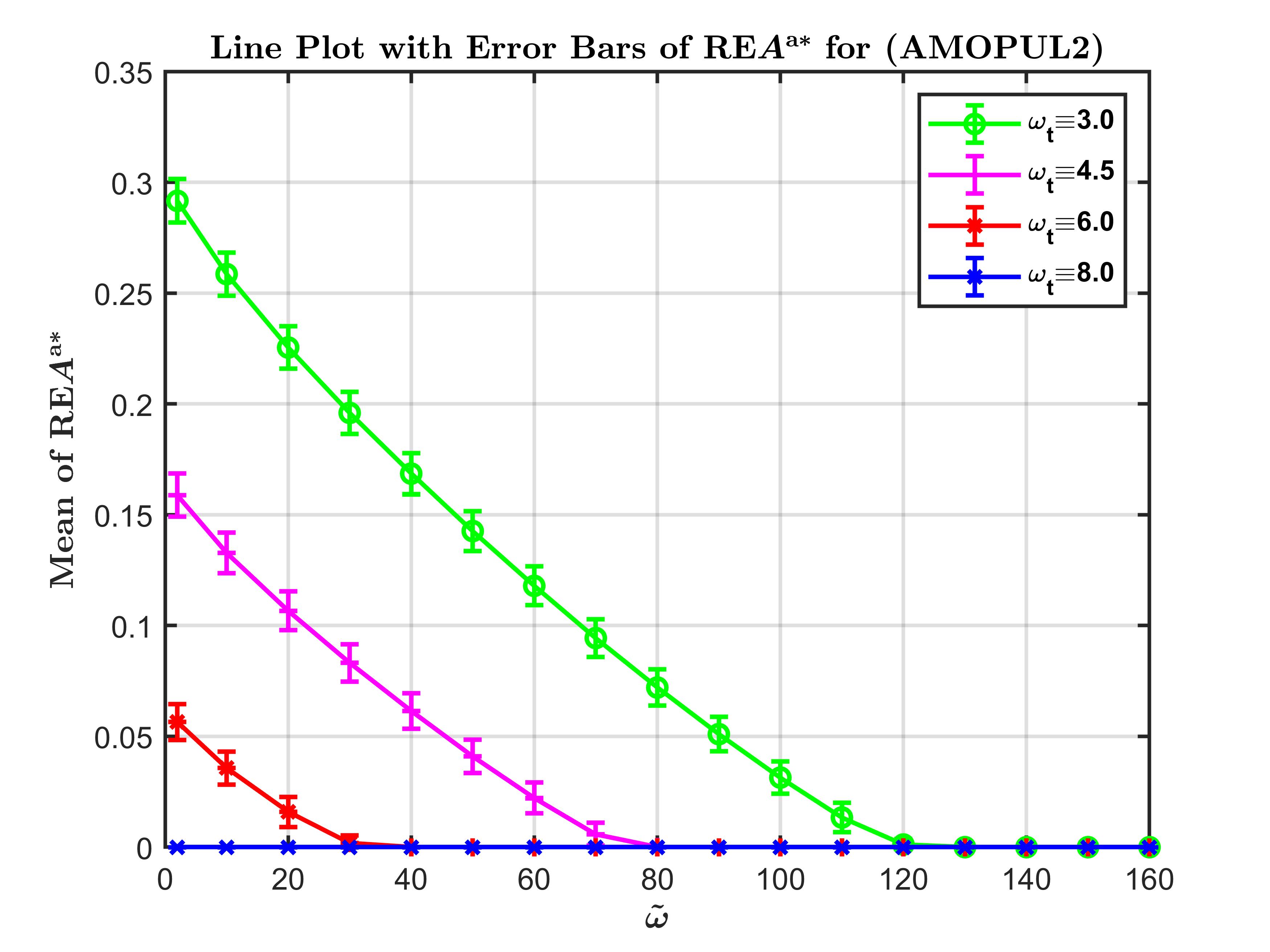}
\label{figure:3}}
\end{subfigure}
\hspace{2.5mm}
\begin{subfigure}[\rm Line plot with error bars of ACE.]{
\centering
\includegraphics[width=0.45\textwidth]{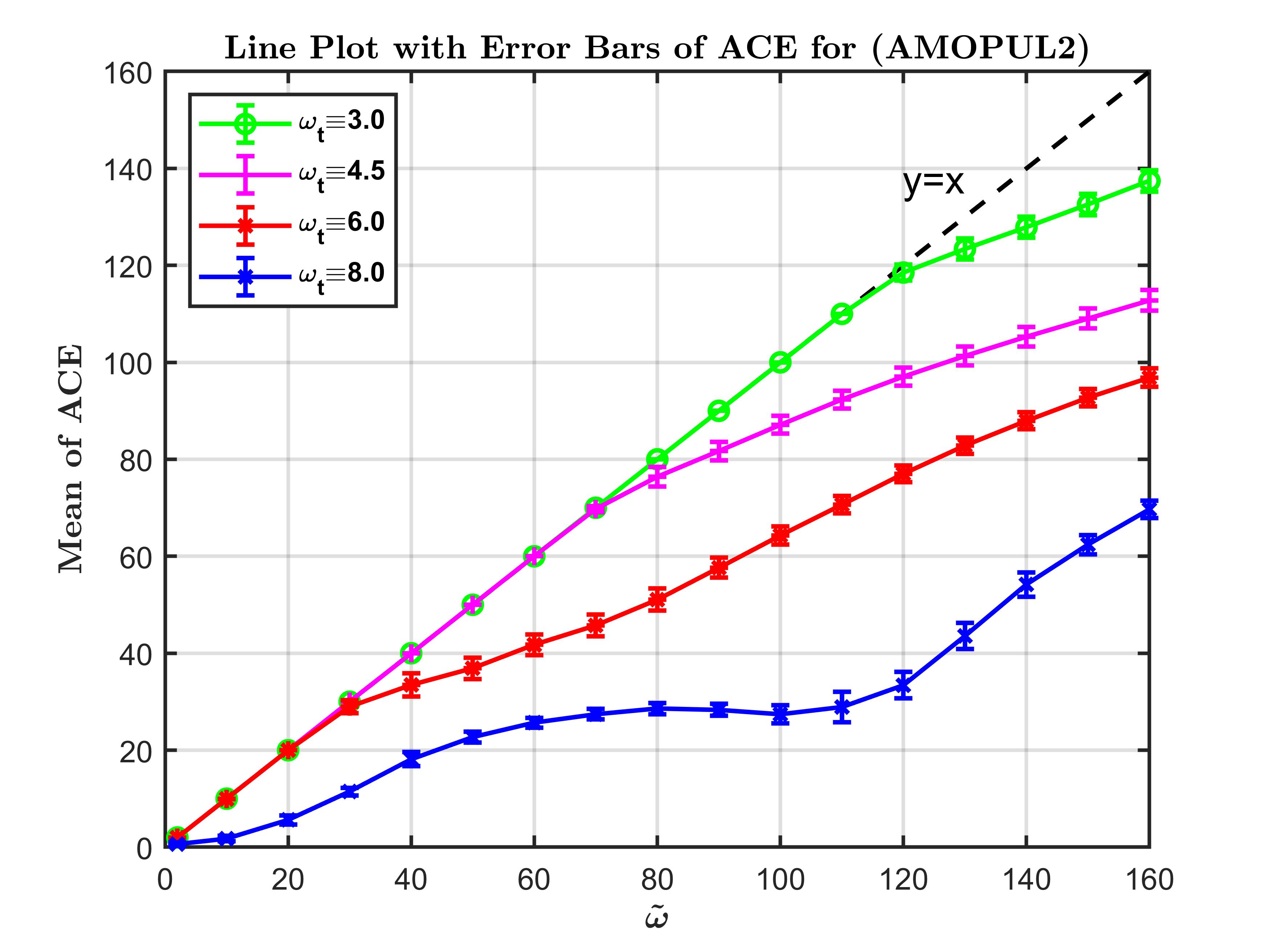}
\label{figure:4}}
\end{subfigure}
\caption{Line plots with error bars of RE$A^{\rm a*}$ and ACE for (\ref{AMOPUL2}) with the noise level $(\mu=0, \sigma=0.5)$ of $\{r_t\}_{t=1}^N$.}
\end{figure}
Two major observations can be made as follows.
\vspace{1mm}

\noindent \textbf{A. Influence of $\boldsymbol{\tilde{\omega}}$}
\vspace{1mm}

For each fixed $\omega_{t}=$ 3, 4.5, 6, $t=0, 1, \ldots, N-1$, the mean of RE$A^{\rm a*}$ decreases to almost zero as $\tilde{\omega}$ increases as shown in Figure \ref{figure:3}. This implies that we can obtain a higher accuracy of recovering the transition matrix through a more relaxed constraint on the cumulative precision in the second constraint of (\ref{amopul2_special}).
However, the means of ACE has an increasing trend with respect to $\tilde{\omega}$ as shown in Figure \ref{figure:4} for each fixed $\omega_t$. Observe that when $\tilde{\omega}$ is small, the ACE curves almost coincide with the line $y=x$, i.e., the equality ACE~$=\tilde{\omega}$ is almost binding in the second constraint of (\ref{amopul2_special}), which means that this constraint becomes active.
When $\tilde{\omega}$ becomes larger, the ACE curves lie below the line $y=x$, i.e., the strict inequality ACE~$<\tilde{\omega}$ is satisfied in the second constraint of (\ref{amopul2_special}), which means that it becomes inactive.
The trade-off between the accuracy of transition matrix recovery and the approximate cumulative error shows that a proper value of $\tilde{\omega}$ plays an important role in the performance of (\ref{AMOPUL2}).
In an extreme case with sufficiently large $\omega_t$ (say, $\omega_{t}=$ 8, $t=0, 1, \ldots, N-1$), since the third constraint of (\ref{amopul2_special}) is sufficiently relaxed, RE$A^{\rm a*}$ is almost equal to zero for each $\tilde{\omega}$ as shown in Figure \ref{figure:3}, while its ACE curve lies below the line $y=x$ as shown in Figure \ref{figure:4}, i.e., the strict inequality ACE~$<\tilde{\omega}$ is satisfied in the second constraint of (\ref{amopul2_special}), which means the constraint becomes inactive.

\vspace{1mm}
\noindent \textbf{B. Influence of $\boldsymbol{\{\omega_{t}\}_{t=0}^{N-1}}$}
\vspace{1mm}

For each fixed value of $\tilde{\omega}$, the means of RE$A^{\rm a*}$ and ACE both decrease as $\omega_{t}$ increases as shown in Figures \ref{figure:3} and \ref{figure:4}, respectively, while we gradually lose the accuracy of recovery of the true $\hat{U}$. In the extreme cases with sufficiently large $\tilde{\omega}=$ 130, 140, 150, 160, i.e., the second constraint of (\ref{amopul2_special}) is much relaxed, RE$A^{\rm a*}$ is almost equal to zero as shown in Figure \ref{figure:3}. In addition, the corresponding curves in Figure \ref{figure:4} lie below the line $y=x$, i.e., the strict inequality ACE~$<\tilde{\omega}$ is satisfied in the second constraint of (\ref{amopul2_special}), which means this constraint  becomes inactive.

Consequently, the values of the control levels $\tilde{\omega}$ and $\{\omega_{t}\}_{t=0}^{N-1}$ play key roles in the performance of (\ref{AMOPUL2}). On one hand, when the values of $\tilde{\omega}$ and $\{\omega_{t}\}_{t=0}^{N-1}$ are small, since the second constraint of (\ref{amopul2_special}) becomes tight, ACE becomes small, i.e., the cumulative precision in the second constraint of (\ref{amopul2_special}) becomes high. However, the value of RE$A^{\rm a*}$ becomes large, i.e., the recovery precision of the true $\hat{A}$ becomes low. In the extreme cases with sufficiently small $\tilde{\omega}$ and $\{\omega_t\}_{t=0}^{N-1}$, i.e., extremely high requirements of the cumulative precision in the second constraint of (\ref{amopul2_special}) and the recovery precision of the true $(\hat{u}_0, \hat{u}_1, \ldots, \hat{u}_{N-1})$ in the third constraint of (\ref{amopul2_special}), (\ref{AMOPUL2}) may be infeasible, for example, setting $\tilde{\omega}=\omega_t=0, t=0, 1, \ldots, N-1$.
On the other hand, when the values of $\tilde{\omega}$ and $\{\omega_{t}\}_{t=0}^{N-1}$ become sufficiently large, the second and third constraints of (\ref{amopul2_special}) may become inactive, while RE$A^{\rm a*}$ becomes small, i.e., the recovery precision of the true $\hat{A}$ becomes high.

Based on the results for (\ref{AMOPUL1}) and (\ref{AMOPUL2}) in Subsection \ref{subsection:4.1} and Subsubsection \ref{subsubsection:4.2.1}, we can see that the accuracy of given reference outputs $\{r_t\}_{t=1}^N$ is the key for the performance of (\ref{AMOPUL}). More accurate $\{r_t\}_{t=1}^N$ leads to better performance of (\ref{AMOPUL}).

In addition, the results of (\ref{AMOPUL2}) in Subsubsection \ref{subsubsection:4.2.2} indicate that the control level of approximate cumulative error plays an important
role in the performance of (\ref{AMOPUL}). With a proper value of the control level, the proposed SDP approximation model may perform very well.

\section{Concluding remarks}\label{section:5}
This paper studies a matrix optimization problem over an uncertain linear system on finite horizon, in which the uncertain transition matrix is regarded as a decision variable. To decouple the entanglement of decision variables caused by the corresponding multivariate polynomial constraints for computational efficiency, we construct a polynomial-time solvable SDP approximation model by taking the given reference values as system outputs at each stage. Theoretical and numerical results show that the reference values of outputs and control levels play key roles of the proposed approach. The SDP approximation performs very well when the noises of reference outputs are small and control levels are proper.

One potential extension of this work is to treat other parameter matrices of an uncertain linear system as decision variables. The entangled decision variables can be similarly decoupled by using the given reference values as substitutions to construct an SDP approximation model. Another potential extension is to study the matrix optimization problem over an uncertain non-linear system. A possible way is to approximate the uncertain nonlinear system by a linear system using the cutting plane/simplicial decomposition methods \cite{Bertsekas2015}. And then develop a polynomial-time solvable SDP approximation model.

\end{document}